\documentclass[10pt]{article}%

\usepackage[utf8]{inputenc}

\usepackage{amsmath}

\usepackage{amsfonts}
\usepackage{mathrsfs}
\usepackage{amssymb, color}
\usepackage[linkcolor=black,anchorcolor=black,citecolor=black]{hyperref}
\usepackage{graphicx}
\numberwithin{equation}{section}
\usepackage[body={15.5cm,21cm}, top=3cm]{geometry}%
\providecommand{\U}[1]{\protect\rule{.1in}{.1in}}
\providecommand{\U}[1]{\protect \rule{.1in}{.1in}}
\newtheorem{theorem}{Theorem}[section]

\newtheorem{lemma}[theorem]{Lemma}

\newtheorem{proposition}[theorem]{Proposition}
\newtheorem{remark}[theorem]{Remark}

\newenvironment{proof}[1][Proof]{\noindent \textbf{#1.} }{\  \rule{0.5em}{0.5em}}
\DeclareMathOperator*{\esssup}{ess\,sup}

\def \E{\mathsf{E}}

\def \P{\mathsf{P}}

\usepackage [numbers,sort&compress] {natbib}

\begin{document}
	\title{Propagation of Chaos for Mean-field Mean Reflected Backward Stochastic Differential Equations}
	\author{ 	Hanwu Li\thanks{Research Center for Mathematics and Interdisciplinary Sciences, Shandong University, Qingdao 266237, Shandong, China. lihanwu@sdu.edu.cn.}
	\thanks{Frontiers Science Center for Nonlinear Expectations (Ministry of Education), Shandong University, Qingdao 266237, Shandong, China.}
    \thanks{Shandong Province Key Laboratory of Financial Risk, Shandong University, Qingdao 266237, Shandong, China.}}
	\date{}
	\maketitle
	
	\begin{abstract}
       In this paper, we establish a propagation of chaos result for mean-field mean reflected backward stochastic differential equations (BSDEs), where both the generator and  constraint depend on the distribution of the solution. When the generator does not rely on $z$, under mild Lipschitz and integrability conditions, we prove existence and uniqueness of the solution to the interacting particle system for general reflections. We are able to consider the case where the generator depends on $z$ when the reflection is linear. In both cases, we obtain the convergence rate of solution to the interacting particle system towards the solution to the mean-field mean reflected BSDEs. 
	\end{abstract}

    \textbf{Key words}: backward stochastic differential equations, mean field, mean reflection, interacting particle system, propagation of chaos

    \textbf{MSC-classification}: 60H10
	
\section{Introduction}


The concept of reflected backward stochastic differential equations (reflected BSDEs) was first proposed by El Karoui et al. \cite{EKPPQ}, originally motivated by constrained problems in mathematical finance, particularly the pricing of American options. Specifically, the constraint is characterized by 
$Y_t\geq S_t$, where $S$ is a predetermined process referred to as the obstacle. Owing to its extensive applications in mathematical finance, economics and partial differential equations, the theoretical research on reflected BSDEs has witnessed rapid development. A variety of generalized frameworks have been established over the years, covering doubly reflected BSDEs, multidimensional reflected BSDEs, BSDEs with non-Lipschitz generators, systems beyond the Brownian motion framework, and obstacles with low regularity. Relevant research can be found in \cite{CM,CK,GP,HT,GIOOQ,K1,KLQT,PX,PR} and the references listed therein.

It is worth noting that the constraints in all the above mentioned papers are made on the paths of the solution. Motivated by the superhedging of contingent claims under running risk management constraint, Briand, Elie and Hu \cite{BEH} introduced the mean reflected BSDE, where the constraint is given in terms of the distribution of the solution written as follows:
\begin{align}\label{constraint1}
\E[l(t,Y_t)]\geq 0, \ t\in[0,T].
\end{align}
Here, $l$ is a given loss function. A typical example is $l(t,x)=I_{\{y\geq u_t\}}-v_t$ and therefore the constraint amounts to say that the process $Y$ is required to beat the benchmark $u$ with a probability greater than $v_t$ at any time $t\in[0,T]$.  Recently, Djehiche, Elie and Hamad\`{e}ne \cite{DEH} studied a new class of reflected BSDEs of mean-field type, where both the generator and  constraint depend on the distribution of the $Y$-component. More precisely, the constraint is written as 
\begin{align}\label{constraint2}
    Y_t\geq l(t,Y_t,\P_{Y_t}), \ t\in[0,T].
\end{align}
Mean-field reflected BSDEs  admit important applications in the pricing of life insurance contracts with surrender options. Since then, the mean reflected BSDE and mean-field reflected BSDE have attracted considerable attention. Chen, Hamad\`{e}ne and Mu \cite{CHM}, Falkowski and S\l omi\'{n}ski \cite{FS} and Li \cite{Li24} investigated the case of double reflections, respectively.  The well-posedness for such equations with non-Lipschitz generators was established in \cite{CZ,HHLLW,HMW',HMW,LS}. Qu and Wang \cite{QW} and Niu, Qu and Wang \cite{NQW} further investigated multi-dimensional mean reflected BSDEs. In addition, extensions to mean-field reflected BSDEs with jumps have been presented in \cite{DD,DDZ,LX1,LX2}.


Since the involved constraints rely on the distribution of solutions, mean reflected BSDEs share a close connection with mean-field BSDEs. For instance, analogous to the penalization approach adopted for classical reflected BSDEs, solutions to mean reflected BSDEs can be approximated by those of a class of penalized mean-field BSDEs (see \cite{BEH,CHM,Li24,LS}). On the other hand, the research on mean-field BSDEs originates from exploring the limiting behavior of high-dimensional forward-backward SDE systems, which characterize the dynamics of large-scale particle populations (see \cite{BDLP}). Accordingly, a natural question arises: whether mean reflected BSDEs can be regarded as the asymptotic dynamics of such particle systems. The answer is affirmative. Specifically, existing literature in \cite{BH,DD,DDZ,LX2,LN} has established the propagation of chaos for mean reflected BSDEs and mean-field reflected BSDEs in respective settings.


This paper aims to establish a propagation of chaos result for mean-field mean reflected BSDEs originally introduced in \cite{HMW}. Inspired by the approximation schemes proposed in \cite{BH} for mean reflected BSDEs and in \cite{BDLP} for standard mean-field BSDEs, we construct a candidate interacting particle system formulated as a multi-dimensional reflected BSDE. We first prove the existence and uniqueness of solutions for this multi-dimensional reflected BSDE. In the linear loss function setting, the well-posedness of the interacting particle system follows directly from the conclusions in \cite{GP}. For the nonlinear case, we first establish solvability for constant generators independent of  $Y$ and $Z$
 via the Snell envelope method. We then adopt a contraction mapping argument to derive the existence and uniqueness of solutions when the generator depends merely on the  $Y$-component. The approximation based on the interacting particle system crucially requires uniform-in-time control of the Wasserstein distance between the empirical measure of an i.i.d. process sequence and its marginal law. Under sufficient smoothness assumptions on the loss function, we leverage the property that the empirical measure is composed of i.i.d. copies of diffusion processes.


Although the solution to mean-field mean reflected BSDEs has been constructed in \cite{HMW}, it remains numerically intractable. Given the existence of multiple algorithms for particle systems based on empirical distributions, our results provide a theoretical basis for the numerical computation of solutions to mean-field mean reflected BSDEs. In comparison with \cite{BH,LN}, the mean reflected BSDE considered in this paper is of mean-field type, meaning that its generator also depends on the expectation of the solution. On the other hand, despite the seemingly more complex constraint in \cite{DD,DDZ} (see \eqref{constraint2}), it cannot degenerate to the constraint \eqref{constraint1} employed herein. Additionally, the loss function  $l$ in \cite{DD,DDZ} imposes additional assumptions on its Lipschitz constant. Furthermore, the distribution of the $Z$-component does not enter the generator of mean-field reflected BSDEs, even when the loss function $l$ in \eqref{constraint2} is linear.

The paper is organized as follows. We introduce some notations and results for mean-field mean reflected BSDEs in Section 2. In Section 3, we first prove the existence and uniqueness result for the solution to interacting particle system when the loss function is nonlinear and the generator is independent of the $Z$-component, and establish the convergence rate of the propagation of chaos. Finally, in the last section, we consider the case of linear reflection, where the generator may depend on both $Y$ and $Z$ as well as their expectations.

 \section{Preliminaries} 
 
 In this section, we first recall some basic results about mean-field mean reflected BSDEs. Given a fixed time $T$, let $(\Omega,\mathcal{F},\P)$ be a complete probability space under which $B=\{B_t\}_{t\in[0,T]}$ is a standard real valued Brownian motion. The augmented natural filtration generated by $B$ is denoted by $\mathbb{F}=\{\mathcal{F}_t\}_{t\in[0,T]}$. $\mathcal{P}$ is the sigma algebra of all $\mathbb{F}$-progressive sets of $\Omega\times[0,T]$. 
 For any given positive integer $m$ and any given filtration $\mathbb{G}=\{\mathcal{G}_t\}_{t\in[0,T]}$, we first introduce the following notations, which will be frequently used in this paper.

\begin{itemize}
\item $L^2(\mathcal{G}_t;\mathbb{R}^m)$: the set of $\mathcal{G}_t$-measurable random variables $\xi$ taking values in $\mathbb{R}^m$ such that $\E[|\xi|^2]<\infty$, $t\in[0,T]$;
\item $\mathcal{S}^2(\mathbb{G};\mathbb{R}^m)$: the set of  $\mathbb{G}$-adapted continuous processes $Y$ on $[0,T]$ taking values in $\mathbb{R}^m$ with $\E[\sup_{t\in[0,T]}|Y_t|^2]<\infty$;
\item $\mathcal{H}^2(\mathbb{G};\mathbb{R}^m)$: the set of $\mathbb{G}$-progressively measurable processes $Z$ taking values in $\mathbb{R}^m$ such that $\E[\int_0^T|Z_t|^2dt]<\infty$;
\item $\mathcal{A}^2(\mathbb{G})$: the set of all continuous, $\mathbb{G}$-adapted and non-decreasing processes $K$ such that $K_0=0$ and $\E[|K_T|^2]<\infty$. 
\item $C[0,T]$: the set of continuous functions from $[0,T]$ to $\mathbb{R}$;
\item $I[0,T]$: the set of functions in $C[0,T]$ starting from the origin which is nondecreasing;
\item $C^{1,2}_b([0,T]\times\mathbb{R})$: the space of all continuous functions on $[0,T]\times \mathbb{R}$, which are continuously differentiable in their first
variable and twice continuously differentiable in their second variable, and all
derivatives are bounded. 
\end{itemize}
For the case that $m=1$ and $\mathbb{G}=\mathbb{F}$, we always omit them in the brackets.

 
Consider the following type of mean-field mean reflected BSDE (see \cite{HMW}):
\begin{equation}\label{nonlinearyz}
\begin{cases}
Y_t=\xi+\int_t^T f(s,Y_s,\E[Y_s],Z_s,\E[Z_s])ds-\int_t^T Z_s dB_s+K_T-K_t,\ t\in[0,T], \\
\E[h(t,Y_t)]\geq 0, \ t\in[0,T] \textrm{ and } \int_0^T \E[h(t,Y_t)]dK_t=0,
\end{cases}
\end{equation}
where the generator $f:\Omega\times[0,T]\times \mathbb{R}^4\rightarrow \mathbb{R}$ and the loss function $h:\Omega\times[0,T]\times\mathbb{R}\rightarrow\mathbb{R}$ are measurable maps with respect to $\mathcal{P}\times \mathcal{B}(\mathbb{R}^4)$ and $\mathcal{F}_T\times\mathcal{B}([0,T])\times\mathcal{B}(\mathbb{R})$, respectively. We propose the following conditions on the terminal value $\xi$, the generator $f$ and the loss function $h$.

\begin{itemize}
 \item[(H1)] $\xi\in L^2(\mathcal{F}_T)$ and $\E[h(T,\xi)]\geq 0$.
 \item[(H2)] The process $\{f(t,0,0,0,0)\}_{t\in[0,T]}$ belongs to $\mathcal{H}^2$ and there exists a constant $L>0$ such that for any $t\in[0,T]$, $y_i,z_i,y'_i,z'_i\in\mathbb{R}$, $i=1,2$,
 \begin{align*}
     |f(t,y_1,y'_1,z_1,z'_1)-f(t,y_2,y'_2,z_2,z'_2)|\leq L(|y_1-y_2|+|z_1-z_2|+|y'_1-y'_2|+|z'_1-z'_2|).
 \end{align*}
 \item[(H3)] There exists a constant $L>0$, such that 
 \begin{itemize}
     \item[1.] $(t,y)\mapsto h(t,y)$ is continuous,
     \item[2.] for any $t\in[0,T]$, $y\mapsto h(t,y)$ is strictly increasing,
     \item[3.] for any $t\in[0,T]$, $\lim_{y\rightarrow \infty}\E[h(t,y)]>0$,
     \item[4.] for any $(t,y)\in[0,T]\times \mathbb{R}$, $|h(t,y)|\leq L(1+|y|)$.
 \end{itemize}
\item[(H4)]  $h$ is bi-Lipschitz continuous in $y$, i.e., there exists $0<m\leq M$, such that for any $t\in[0,T]$, $y_1,y_2\in\mathbb{R}$,
\begin{align*}
m|y_1-y_2|\leq |h(t,y_1)-h(t,y_2)|\leq M|y_1-y_2|.
\end{align*}
 \end{itemize} 
 In order to construct the solution to \eqref{nonlinearyz}, for each $t\in[0,T]$, the following operator $L_t:L^2(\mathcal{F}_T)\rightarrow\mathbb{R}$ plays an important role:
 \begin{align*}
     L_t(\eta):=\inf\{x\geq 0:\E[h(t,x+\eta)]\geq 0\}.
 \end{align*}
 
 \begin{theorem}[\cite{BEH,HMW}]\label{thm2.2}
 Under Assumptions (H1)-(H4), the mean-field mean reflected BSDE associated with $(\xi,f,h)$ admits a unique solution $(Y,Z,K)\in \mathcal{S}^2\times\mathcal{H}^2\times I[0,T]$. Moreover, for each $t\in[0,T]$, we have 
 \begin{align*}
     K_T-K_t=\sup_{s\in[t,T]}L_s(\bar{Y}_s), 
 \end{align*}
 where
 \begin{align*}
\bar{Y}_s:=\E\left[\xi+\int_s^T f(r,Y_r,\E[Y_r],Z_r,\E[Z_r])ds\Big|\mathcal{F}_s\right].
\end{align*}
 \end{theorem}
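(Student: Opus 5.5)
The plan is to split the argument in two. First I solve the problem with a generator frozen along a given pair of processes, using an explicit construction via the operator $L_t$. Then I obtain the general case by a Picard/contraction argument on a short time interval and patch the intervals together backward in time.

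\textbf{Step 1 (frozen generator).} Fix $(U,V)\in\mathcal{S}^2\times\mathcal{H}^2$ and set $C_s:=f(s,U_s,\E[U_s],V_s,\E[V_s])$, which lies in $\mathcal{H}^2$ by (H2). Define
\begin{align*}
\bar{Y}_s:=\E\left[\xi+\int_s^T C_r\,dr\Big|\mathcal{F}_s\right],\qquad K_T-K_t:=\sup_{s\in[t,T]}L_s(\bar{Y}_s).
\end{align*}
Then set $Y_t:=\bar{Y}_t+K_T-K_t$, and obtain $Z$ from the martingale representation of $\E[\xi+\int_0^TC_rdr\,|\,\mathcal{F}_t]$.

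Several facts need checking. By (H3)(2)--(3), $L_s$ is well defined. By the bi-Lipschitz property (H4), $|L_s(\eta)-L_s(\eta')|\le \frac{M}{m}\E|\eta-\eta'|$. Together with continuity of $s\mapsto \bar Y_s$ in $L^1$ and continuity of $h$, this gives that $s\mapsto L_s(\bar Y_s)$ is continuous. Consequently $K$ is deterministic, continuous, nondecreasing, and $K_0=0$ after the usual normalization. Also $L_T(\xi)=0$ by (H1).

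The constraint holds because $Y_s\ge \bar Y_s+L_s(\bar Y_s)$. For the Skorokhod condition, observe that $K$ increases only at times $t$ where $K_T-K_t=L_t(\bar Y_t)>0$. At such times $\E[h(t,Y_t)]=0$, by continuity and strict monotonicity of $x\mapsto\E[h(t,x+\bar Y_t)]$. Uniqueness for the frozen problem follows by the standard argument, as in \cite{BEH}. One first shows that any solution has deterministic $K$, since taking expectations forces $Y-\E[Y|\cdot]$ to be determined. Then the minimality of the Skorokhod-type flat-off condition identifies $K_T-K_t$ with the running sup.

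\textbf{Step 2 (contraction).} Let $\Phi(U,V)=(Y,Z)$ be the map from Step 1, considered on $[T-\delta,T]$. For two inputs, compare the outputs. The martingale parts differ by the difference of the conditional expectations of $\int (C-C')$. The $K$ parts differ by at most
\begin{align*}
\sup_s|L_s(\bar Y_s)-L_s(\bar Y'_s)|\le \frac{M}{m}\sup_s\E|\bar Y_s-\bar Y'_s|.
\end{align*}
Doob's inequality together with Itô isometry then gives
\begin{align*}
\E\Big[\sup|\Delta Y|^2\Big]+\E\int|\Delta Z|^2\le C(L,M/m)\,\delta\,\Big(\E\Big[\sup|\Delta U|^2\Big]+\E\int|\Delta V|^2\Big).
\end{align*}
Choosing $\delta$ small, independently of the data, makes $\Phi$ a contraction, which yields a unique fixed point on $[T-\delta,T]$. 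I then repeat on $[T-2\delta,T-\delta]$, using $Y_{T-\delta}$ as the terminal value; note that $\E[h(T-\delta,Y_{T-\delta})]\ge0$ holds. Finitely many steps cover $[0,T]$.

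Concatenating the pieces, I must check two things. First, that the global $K$ equals $\sup_{s\in[t,T]}L_s(\bar Y_s)$ with $\bar Y$ built from the full solution. Second, that uniqueness holds globally. Both follow by induction, since on each piece $L_s$ applied to the local $\bar Y$ differs from the global one only by the deterministic shift $K_T-K_{T-k\delta}$. Absorbing this shift is exactly what the sup formula does, because $L_s(\eta+c)=(L_s(\eta)-c)^+$ for $c\ge0$ deterministic.

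\textbf{Main obstacle.} The delicate point is the Lipschitz estimate for $L_s$ and the resulting control of the $K$-difference. In particular, with $Z$ entering the generator, the estimate must produce a factor $\delta$ and not merely $\sqrt\delta$ weighted by a constant larger than one. If this fails, I would instead use a weighted norm $e^{\beta t}$ on the whole interval and bound $\E|\bar Y_s-\bar Y'_s|\le \E\int_s^T|C-C'|$, which gives a contraction for large $\beta$ directly.
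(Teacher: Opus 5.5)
Your proposal is correct and follows essentially the standard argument behind \cite{BEH,HMW}: the explicit running-supremum formula for a frozen driver (Proposition 7 in \cite{BEH}), then a contraction on a short interval $[T-\delta,T]$, then backward concatenation, which is also the scheme the paper reuses in the proof of Theorem~\ref{thm4.1}. Two corrections: your \emph{main obstacle} is not one, since $\bigl(\int_{T-\delta}^T|\Delta C_r|\,dr\bigr)^2\le\delta\int_{T-\delta}^T|\Delta C_r|^2\,dr$ already gives the factor $\delta$ for both $\Delta Y$ and $\Delta Z$; and you should drop the claim that every solution has deterministic $K$, which is false and unnecessary (for instance, with $f\equiv-1$, $h(t,y)=y$ and $\E[\xi]=0$, both $K_t=t$ and the adapted $K_t=\int_0^t(1+\tfrac12\sin B_s)\,ds$ give solutions), and only $K\in I[0,T]$, which is built into the solution class, makes uniqueness hold.
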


\begin{remark}\label{rem MFMRBSDE}
    (1) The generator of mean-field mean reflected BSDE may depends on the distribution of $Y$ and $Z$. That is, the dynamics of \eqref{nonlinearyz} can be written as follows
    \begin{align*}
        Y_t=\xi+\int_t^T f(s,Y_s,\P_{Y_s},Z_s,\P_{Z_s})ds-\int_t^T Z_s dB_s+K_T-K_t.
    \end{align*}
    For $p\geq 1$, let $\mathcal{P}_p(\mathbb{R})$ be the collection of all probability measures on $(\mathbb{R},\mathcal{B}(\mathbb{R}))$ with finite $p$th-moment, endowed with the $p$-Wasserstein distance $W_p$. 
     Suppose that $f$ satisfies (H2'), where
     \begin{itemize}
         \item[(H2')]  The process $\{f(t,0,\delta_0,0,\delta_0)\}_{t\in[0,T]}$ belongs to $\mathcal{H}^2$ and there exists a constant $L>0$ such that for any $t\in[0,T]$, $y_i,z_i\in\mathbb{R}$ and $\mu_i,\nu_i\in\mathcal{P}_1(\mathbb{R})$, $i=1,2$,
 \begin{align*}
     |f(t,y_1,\mu_1,z_1,\nu_1)-f(t,y_2,\mu_2,z_2,\nu_2)|\leq L(|y_1-y_2|+|z_1-z_2|+W_1(\mu_1,\mu_2)+W_1(\nu_1,\nu_2)),
 \end{align*}
     \end{itemize}
     Then, the mean-field mean reflected BSDE with parameters $(\xi,f,h)$ has a unique solution $(Y,Z,K)$. 

(2) When the generator does not depends on the distribution of $Z$ and has quadratic growth in $Z$, the mean-field mean reflected BSDE admits a unique solution (see Theorem 2 and Theorem 3 in \cite{HMW}).

(3) In \cite{DEH}, the authors introduce another type of mean-field reflected BSDEs
\begin{equation}\label{nonlinearyz'}
    \begin{cases}
Y_t=\xi+\int_t^T f(s,Y_s,\P_{Y_s},Z_s)ds-\int_t^T Z_s dB_s+K_T-K_t,\ t\in[0,T], \\
Y_t\geq l(t,Y_t,\P_{Y_t}), \ t\in[0,T] \textrm{ and } \int_0^T (Y_t-l(t,Y_t,\P_{Y_t})dK_t=0.
\end{cases}
\end{equation}
A more general reflected BSDE of mean-field type with jumps has been considered in \cite{DDZ}. It should be pointed out that in \cite{DEH}, if the generator $f$ depends on $Z$ and the distribution of $Y$, it needs the monotonicity assumption with respect to the measure component (see Remark 5.2 (b) in \cite{DEH}). Although in \cite{DDZ}, the authors get rid of the monotonicity assumption, the generator $f$ can not depend on the distribution of $Z$. Moreover, in both papers, the barrier $l$ needs to satisfy the following condition: 

\begin{itemize}
    \item There exist two positive constants $\gamma_1,\gamma_2$, such that for any $y_1,y_2\in\mathbb{R}$, $\mu_1,\mu_2\in\mathcal{P}_2(\mathbb{R})$,
\begin{align*}
    |l(t,y_1,\mu_1)-l(t,y_2,\mu_2)|\leq \gamma_1|y_1-y_2|+\gamma_2W_2(\mu_1,\mu_2).
\end{align*}
\end{itemize}

The existence and uniqueness of the square-integrable solution to mean-field reflected BSDE \eqref{nonlinearyz'} require that the Lipschitz constant $\gamma_1$ and $\gamma_2$ are sufficiently small. More precisely, in \cite{DEH}, $\gamma_1$ and $\gamma_2$ should satisfy
\begin{align*}
    (\gamma_1+\gamma_2)^{\frac{1}{2}}(4\gamma_1+\gamma_2)^{\frac{1}{2}}<1,
\end{align*}
while in \cite{DDZ}, $\gamma_1$ and $\gamma_2$ should satisfy
\begin{align*}
    2(\gamma_1^2+\gamma_2^2)<1.
\end{align*}
\end{remark}


 \section{The case of nonlinear reflection}

The objective of the paper is to approximate the solution of mean-field mean reflected BSDE \eqref{nonlinearyz} by an interacting particle system. For this purpose, given a positive integer $N$, let $\{\xi^i\}_{1\leq i\leq N}$, $\{f^i\}_{1\leq i\leq N}$, $\{B^i\}_{1\leq i\leq N}$ be independent copies of $\xi,f$ and $B$, respectively. The augmented filtrations generated by $B^i$ and the family of  $\{B^i\}_{1\leq i\leq N}$ are denoted by $\mathbb{F}^i=\{\mathcal{F}^i_t\}_{t\in[0,T]}$ and $\mathbb{F}^{(N)}=\{\mathcal{F}^{(N)}_t\}_{t\in[0,T]}$, respectively. For any $0\leq s\leq t \leq T$, $\mathcal{T}^{N}_{s,t}$ is the collection of all $\mathbb{F}^{(N)}$-stopping times taking values in $[s,t]$. 

\begin{remark}\label{remarkxi}
 Suppose that the terminal value $\xi$ and the generator $f$ of mean-field mean reflected BSDE \eqref{nonlinearyz} take the following form:
\begin{align*}
\xi=G(\{B_t\}_{t\in[0,T]}), \ f(t,y,y',z,z')=F(t,\{B_{s\wedge t}\}_{s\in[0,T]},y,y',z,z'\}),
\end{align*}
where $G,F$ are measurable functions ensuring that (H1) and (H2) hold. Then, for any $1\leq i\leq N$, we may take
\begin{align*}
\xi^i=G(\{B^i_t\}_{t\in[0,T]}), \ f(t,y,z)=F(t,\{B^i_{s\wedge t}\}_{s\in[0,T]},y,y',z,z'\}).
\end{align*}
\end{remark}

In the following of this section, we suppose that $\xi,f,h$ satisfy conditions (H1)-(H5), where
\begin{itemize}
    \item [(H5)] $f$ is independent of $z,z'$ and $h$ is independent of $\omega$. 
\end{itemize}

\subsection{Well-posedness of the particle system}

Before introducing the particle system, for any positive integer $N$ and any $t\in[0,T]$, we first consider an operator $L^{(N)}_t:L^2(\mathcal{F}^{(N)}_t;\mathbb{R}^N)\rightarrow L^2(\mathcal{F}^{(N)}_t;\mathbb{R}^N)$ defined as follows: 
\begin{align*}
        L^{(N)}_t(X):=\inf\left\{x\geq 0:\frac{1}{N}\sum_{i=1}^N h(t,X^i+x)\geq 0\right\}, \ X=(X^1,\cdots,X^N).
    \end{align*}
This operator is of vital importance in establishing the existence of the solution to the particle system and the rate of convergence of the particle system towards the solution to mean-field mean reflected BSDE. We first introduce some properties of the operator $L^{(N)}_t(\cdot)$. 

\begin{proposition}\label{property of LN}
    (i) For any $X,Y\in L^2(\mathcal{F}^{(N)}_t;\mathbb{R}^N)$, we have 
    \begin{align}\label{LtXLtY}
        |L_t^{(N)}(X)-L_t^{(N)}(Y)|\le \frac{M}{m}\frac{1}{N}\sum_{j=1}^N|X^j-Y^j|.
    \end{align}
   
  \noindent  (ii) For any $S\in\mathcal{S}^2(\mathbb{F}^{(N)};\mathbb{R}^N)$, we have $\{L^{(N)}_t(S_t)\}_{t\in[0,T]}\in \mathcal{S}^2(\mathbb{F}^{(N)})$.
\end{proposition}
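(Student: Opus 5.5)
For part (i) we work $\omega$ by $\omega$. Fix $X,Y$ and set $\phi_X(x):=\frac1N\sum_{i=1}^N h(t,X^i+x)$, and define $\phi_Y$ in the same way. By (H3) and (H4), $\phi_X$ is continuous and strictly increasing, with $\phi_X(x_2)-\phi_X(x_1)\ge m(x_2-x_1)$ for $x_2\ge x_1$. Hence $\phi_X(x)\to\infty$, so $L^{(N)}_t(X)$ is finite. There are two possibilities. If $\phi_X(0)\ge 0$, then $L^{(N)}_t(X)=0$. Otherwise $L^{(N)}_t(X)>0$, and by continuity $\phi_X(L^{(N)}_t(X))=0$.

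By symmetry we may assume $a:=L^{(N)}_t(X)\ge b:=L^{(N)}_t(Y)$, and we only treat $a>0$, since $a=0$ gives $a=b=0$. Then $\phi_X(a)=0$ and $\phi_Y(b)\ge0$. Using the lower bi-Lipschitz bound first and the upper one second,
\begin{align*}
m(a-b)\le \phi_X(a)-\phi_X(b) = -\phi_X(b)\le \phi_Y(b)-\phi_X(b)\le \frac{M}{N}\sum_{j=1}^N|X^j-Y^j|.
\end{align*}
Dividing by $m$ gives \eqref{LtXLtY}. Measurability of $L^{(N)}_t(X)$ also has to be checked. Write $L^{(N)}_t(X)=\inf\{x\in\mathbb{Q}_{\ge0}:\phi_X(x)\ge0\}$; the infimum over rationals coincides with the original one by continuity and monotonicity of $\phi_X$. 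Each $\phi_X(x)$ is $\mathcal{F}^{(N)}_t$-measurable because $h$ does not depend on $\omega$ by (H5). So $L^{(N)}_t(X)$ is $\mathcal{F}^{(N)}_t$-measurable.

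For square integrability we apply (i) with $Y=-c\mathbf{1}$ for a suitable constant. It is simpler to note that $L^{(N)}_t(0)=\inf\{x\ge0:h(t,x)\ge0\}=:\ell_t$ is deterministic. Then (i) gives $|L^{(N)}_t(X)|\le \ell_t+\frac{M}{m}\frac1N\sum_j|X^j|$, which is in $L^2$.

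For part (ii), adaptedness follows from the measurability argument above applied at each $t$. For continuity, fix $\omega$ and set $g(t,x):=\frac1N\sum_i h(t,S^i_t+x)$. This map is jointly continuous in $(t,x)$ because $h$ and $S$ are continuous. For $s,t\in[0,T]$ we repeat the estimate of (i) with $\phi$ replaced by $g(t,\cdot)$ and $g(s,\cdot)$. This gives
\begin{align*}
|L^{(N)}_t(S_t)-L^{(N)}_s(S_s)|\le \frac1m \sup_{x\in[0,R]}|g(t,x)-g(s,x)|,
\end{align*}
where $R$ bounds $L^{(N)}_r(S_r)$ over $r\in[0,T]$. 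The main subtlety is the existence of such a bound $R$. We get it from the estimate $L^{(N)}_r(S_r)\le \ell_r+\frac{M}{m}\sup_r\frac1N\sum_j|S^j_r|$. For this we need $\ell$ to be bounded on $[0,T]$. This follows because $h(r,x)\ge h(r,0)+mx$ for $x\ge0$, so $\ell_r\le |h(r,0)|/m$, and $h(\cdot,0)$ is continuous on the compact interval $[0,T]$.

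Uniform continuity of $g$ on the compact set $[0,T]\times[0,R]$ then gives continuity of $t\mapsto L^{(N)}_t(S_t)$. Finally, $\E[\sup_t|L^{(N)}_t(S_t)|^2]\le C(1+\E[\sup_t|S_t|^2])<\infty$ by the same bound. Hence $\{L^{(N)}_t(S_t)\}_{t\in[0,T]}\in\mathcal{S}^2(\mathbb{F}^{(N)})$.
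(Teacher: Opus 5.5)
Your proof is correct. Its core mechanism is the same as the paper's: the zeros of increasing functions whose slopes lie in $[m,M]$ are stable under perturbation. For (i), you prove directly the inequality that the paper imports from Eq.~(4) of Briand--Hibon. You also verify the $\mathcal{F}^{(N)}_t$-measurability of $L^{(N)}_t(X)$, which the paper takes for granted.

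The continuity step in (ii) is organized differently. The paper splits $|L^{(N)}_t(S_t)-L^{(N)}_s(S_s)|$ through the intermediate value $L^{(N)}_t(S_s)$. The first piece is handled by (i). For the second, it passes to the unconstrained root $\tilde{L}^{(N)}$, with $L^{(N)}=(\tilde{L}^{(N)})^+$, so that $h^{(N)}_{S_s}(t,\tilde{L}^{(N)}_t(S_s))=0$. The lower Lipschitz bound then gives $|\tilde{L}^{(N)}_t(S_s)-\tilde{L}^{(N)}_s(S_s)|\le\frac1m|h^{(N)}_{S_s}(s,\tilde{L}^{(N)}_s(S_s))-h^{(N)}_{S_s}(t,\tilde{L}^{(N)}_s(S_s))|$. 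This involves only the frozen point $\tilde{L}^{(N)}_s(S_s)$, so pointwise continuity of $h$ in $t$ suffices and no a priori bound is needed.

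Your one-shot comparison of $g(t,\cdot)$ and $g(s,\cdot)$ uses the constrained infimum, where only $g(s,b)\ge 0$ (not $=0$) is available. The evaluation point is therefore $\min(a,b)$, which may be the moving point $L^{(N)}_t(S_t)$. That is why you need the bound $R$ and uniform continuity on $[0,T]\times[0,R]$. You supply both correctly.

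Your estimate $\ell_r\le|h(r,0)|/m$ (hence $\le L/m$ by the growth condition in (H3)) is a cleaner justification than the paper's bare assertion that $x_t$ is continuous. That assertion is true, but only boundedness is used.

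If you want to drop $R$ altogether, use the exact roots $\tilde a,\tilde b$ of $g(t,\cdot),g(s,\cdot)$. These give $m|\tilde a-\tilde b|\le|g(t,\tilde b)-g(s,\tilde b)|$ directly, together with $|a-b|\le|\tilde a-\tilde b|$.
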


\begin{proof}
   By the proof of Theorem 3.1 in \cite{BH} (see Eq. (4)), we obtain \eqref{LtXLtY} directly. 
Let us set $x_t=\inf\{x\geq 0:h(t,x)\geq 0\}$. By Eq. \eqref{LtXLtY}, we have 
    \begin{align}\label{LtSt}
        |L^{(N)}_t(S_t)|\leq |L^{(N)}_t(S_t)-L^{(N)}_t(0)|+|L^{(N)}_t(0)|\leq x_t+\frac{M}{m}\frac{1}{N}\sum_{j=1}^N|S^j_t|.
    \end{align}
    Noting that $\{x_t\}_{t\in[0,T]}$ is a continuous function, it follows that 
    \begin{align*}
        \E\left[\sup_{t\in[0,T]}|L^{(N)}_t(S_t)|^2\right]<\infty.
    \end{align*}
   It is clear that $\{L^{(N)}_t(S_t)\}_{t\in[0,T]}$ is $\mathbb{F}^{(N)}$-adapted. It remains to prove that it is continuous. To this end, given $y=(y^1,\cdots,y^N)\in\mathbb{R}^N$, we define 
  \begin{align*}
      h^{(N)}_y(t,x)=\frac{1}{N}\sum_{i=1}^N h(t,y^i+x).
  \end{align*}
  It is easy to check that, for any $x\in\mathbb{R}$, $h^{(N)}_y(\cdot,x)$ is continuous and for any $t\in[0,T]$, $h^{(N)}_y(t,\cdot)$ is strictly increasing and bi-Lipschitz, i.e., for any $x,x'\in\mathbb{R}$,
  \begin{align*}
      m|x-x'|\leq |h^{(N)}_y(t,x)-h^{(N)}_y(t,x')|\leq M |x-x'|.
  \end{align*}
  Therefore, for any $X\in L^2(\mathcal{F}^{(N)}_t;\mathbb{R}^N)$, the following equation admits a unique solution $\tilde{L}^{(N)}_t(X)$
  \begin{align}\label{tildeLN}
        \frac{1}{N}\sum_{i=1}^N h(t,X^i+\tilde{L}^{(N)}_t(X))= 0.
    \end{align}
  Moreover, we have $L^{(N)}_t(X)=(\tilde{L}^{(N)}_t(X))^+$. For any $0\leq s\leq t\leq T$, simple calculation yields that 
    \begin{equation}\label{LtXLsX}\begin{split}
        |L^{(N)}_t(S_s)-L^{(N)}_s(S_s)|\leq &|\tilde{L}^{(N)}_t(S_s)-\tilde{L}^{(N)}_s(S_s)|\\
        \leq& \frac{1}{m}|h^{(N)}_{S_s}(t,\tilde{L}^{(N)}_t(S_s))-h^{(N)}_{S_s}(t,\tilde{L}^{(N)}_s(S_s))|\\
        =& \frac{1}{m}|h^{(N)}_{S_s}(s,\tilde{L}^{(N)}_s(S_s))-h^{(N)}_{S_s}(t,\tilde{L}^{(N)}_s(S_s))|.
    \end{split}\end{equation}
Combining Eqs. \eqref{LtXLtY} and \eqref{LtXLsX}, the process $\{L^{(N)}_t(S_t)\}_{t\in[0,T]}$ is continuous. Finally, we obtain the desired result.
\end{proof}

In this subsection, we consider the interacting particle system taking the following form:
\begin{equation}\label{eq7}
\begin{cases}
Y^i_t=\theta^i+\int_t^T f^i\left(s,Y^i_s,\frac{1}{N}\sum_{j=1}^N Y^j_s\right) ds-\int_t^T\sum_{j=1}^N Z^{i,j}_sdB^j_s+K^{(N)}_T-K^{(N)}_t, \ \forall 1\leq i\leq N,\\
 \frac{1}{N}\sum_{i=1}^N h(t,Y^i_t)\geq 0, \ t\in[0,T], \textrm{ and }
\int_0^T\frac{1}{N}\sum_{i=1}^N h(t,Y^i_t)dK^{(N)}_t=0,
\end{cases}
\end{equation}
where 
\begin{align*}
    \theta^i=\xi^i+L^{(N)}_T(\xi^{(N)}), \ \xi^{(N)}=(\xi^1,\cdots,\xi^N).
\end{align*}
Actually, this particle system can be viewed as a multi-dimensional reflected BSDE. As claimed in \cite{BH}, the terminal value of \eqref{eq7} cannot be taken as $(\xi^1,\cdots,\xi^N)$, since under the assumption that $\E[h(T,\xi)]\geq 0$, we do not have in general
\begin{align*}
    \frac{1}{N}\sum_{i=1}^N h(T,\xi^i)\geq 0.
\end{align*}
However, by the definition of $L^{(N)}_T$, we have 
\begin{align*}
    \frac{1}{N}\sum_{i=1}^N h(T,\theta^i)\geq 0.
\end{align*}
Now, we state the main result in this subsection, which generalized Proposition 4.1 in \cite{BH} to the mean-field setting. 


\begin{theorem}\label{thm4.1}
The particle system \eqref{eq7} has a unique solution $(\{Y^i,Z^i\}_{1\leq i\leq N},K^{(N)})$ with $K^{(N)}\in \mathcal{A}^2(\mathbb{F}^{(N)})$, $Y^i\in \mathcal{S}^2(\mathbb{F}^{(N)})$ and $Z^i\in \mathcal{H}^2(\mathbb{F}^{(N)};\mathbb{R}^N)$ for $i=1,\cdots,N$. 
\end{theorem}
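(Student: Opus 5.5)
The plan follows the outline in the introduction: first solve the system when the generators are given processes independent of $(Y,Z)$, using a Snell envelope representation of $K^{(N)}$; then pass to generators depending on $Y$ by a contraction mapping on $\mathcal{S}^2(\mathbb{F}^{(N)};\mathbb{R}^N)$.

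\emph{Step 1: generators independent of $Y$.} Suppose $f^i(s,\cdot)=g^i_s$ with $g^i\in\mathcal{H}^2(\mathbb{F}^{(N)})$. Define
\[
\bar Y^i_t:=\E\Big[\theta^i+\int_t^T g^i_s ds\Big|\mathcal{F}^{(N)}_t\Big].
\]
Then $\bar Y\in\mathcal{S}^2(\mathbb{F}^{(N)};\mathbb{R}^N)$, with martingale representation $\bar Y^i_t=\bar Y^i_0-\int_0^t g^i_sds+\int_0^t\sum_j Z^{i,j}_sdB^j_s$. By Proposition \ref{property of LN}(ii), $\ell_t:=L^{(N)}_t(\bar Y_t)$ lies in $\mathcal{S}^2(\mathbb{F}^{(N)})$. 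Since $\bar Y_T=\theta$ and $\frac1N\sum_i h(T,\theta^i)\ge0$, we get $\ell_T=0$.

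The natural guess is $K^{(N)}_T-K^{(N)}_t=\sup_{s\in[t,T]}\ell_s$. This is deterministic-looking in \cite{BH}, but here $\ell$ is random, so the deterministic formula must be replaced. I would instead take the Snell envelope
\[
R_t:=\operatorname*{ess\,sup}_{\tau\in\mathcal{T}^N_{t,T}}\E[\ell_\tau|\mathcal{F}^{(N)}_t].
\]
Its Doob--Meyer decomposition is $R_t=M_t-A_t$, with $A$ continuous and nondecreasing, $A_0=0$, and $A\in\mathcal{A}^2$ since $\ell\in\mathcal{S}^2$. Set $K^{(N)}_t:=A_t$ and $Y^i_t:=\bar Y^i_t+R_t$. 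Then $Y^i$ solves the BSDE with terminal value $\theta^i$ (using $R_T=\ell_T=0$) and common increasing process $K^{(N)}$, the martingale part of $R$ being absorbed into the $Z^{i,j}$.

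The constraint holds because $R_t\ge\ell_t$, $h$ is increasing, and $R_t\ge 0$, so $\frac1N\sum_i h(t,\bar Y^i_t+R_t)\ge0$ by the definition of $L^{(N)}_t$ as an infimum over $x\ge 0$.

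For the Skorokhod condition, $A$ increases only on $\{R_t=\ell_t\}$. On that set with $\ell_t>0$, we have $\ell_t=\tilde L^{(N)}_t(\bar Y_t)$, so $\frac1N\sum_i h(t,Y^i_t)=0$. When $\ell_t=R_t=0$, the Snell envelope equals $0$ and is bounded below by $\ell\ge0$. Hence $\ell\equiv 0$ from $t$ on a.s., so $R$ stays at $0$ and $A$ does not increase there. Making this last point rigorous needs care, namely $\int \mathbf 1_{\{R=0\}}dA=0$. I would use that $A$ is flat on $\{R_t>\ell_t\}$ and that on $\{R=\ell=0\}$ the supermartingale $R\ge0$ at level $0$ must remain $0$.

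\emph{Uniqueness in Step 1.} Given two solutions, the differences $\delta Y^i=\delta R$ are common to all $i$. A comparison argument in the spirit of \cite{BH}, applying It\^o's formula to $(\delta R)^2$ and using the Skorokhod conditions with strict monotonicity of $h$, gives $\delta R\equiv 0$. For example, on $\{\delta R_t>0\}$ the first solution has $\frac1N\sum h(t,Y^i_t)>0$, so $dK^{1}=0$ there, which yields $\int \delta R\,d(\delta K)\le0$.

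\emph{Step 2: generators depending on $Y$.} For $U\in\mathcal{S}^2(\mathbb{F}^{(N)};\mathbb{R}^N)$, let $\Phi(U)$ be the Step 1 solution with $g^i_s=f^i(s,U^i_s,\frac1N\sum_jU^j_s)$, which belongs to $\mathcal{H}^2$ by (H2). Estimate $\Phi(U)-\Phi(V)$ via the representation $Y=\bar Y+R$. The conditional expectation part is bounded by $2L\int_t^T\E[\max_j|U^j_s-V^j_s||\mathcal{F}^{(N)}_t]ds$. The $R$ part uses $|\ell^U-\ell^V|\le\frac Mm\frac1N\sum_j|\bar Y^{U,j}-\bar Y^{V,j}|$ from Proposition \ref{property of LN}(i), together with $|\operatorname{ess\,sup}\E[\ell^U_\tau]-\operatorname{ess\,sup}\E[\ell^V_\tau]|\le\operatorname{ess\,sup}\E[|\ell^U_\tau-\ell^V_\tau|]$. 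Doob's inequality then gives
\[
\E\Big[\sup_{t\in[T-\delta,T]}|\Phi(U)_t-\Phi(V)_t|^2\Big]\le C(1+M/m)^2\delta^2\,\E\Big[\sup_{t\in[T-\delta,T]}|U_t-V_t|^2\Big].
\]
This is a contraction for small $\delta$, with $\delta$ depending only on $L,M,m$.

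Iterating backward on intervals of length $\delta$ then yields global existence and uniqueness. On each subinterval the terminal value is the previous $Y_{T-k\delta}$, which satisfies the constraint, so the analogue of Step 1 applies with $\theta$ replaced by this value. The main obstacle is the Skorokhod verification in Step 1, together with keeping Step 1 valid for such general admissible terminal values on the subintervals.
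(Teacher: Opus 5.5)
Your proposal is correct and follows essentially the same route as the paper: the Snell-envelope construction $Y^i=\bar Y^i+R$ for generators not depending on $Y$, a contraction on short time intervals driven by the Lipschitz bound \eqref{LtXLtY} for $L^{(N)}$, and backward concatenation over a partition of $[0,T]$. The differences are cosmetic. You center the conditional expectation at $\theta^i$ instead of $\xi^i$, which yields the same solution because the common martingale $\E[L^{(N)}_T(\xi^{(N)})|\mathcal{F}^{(N)}_t]$ is absorbed into the Snell envelope; and you explicitly check the Skorokhod condition and uniqueness in the $Y$-independent case, which the paper simply takes from \cite{BH}.
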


\begin{proof}
First, let us briefly introduce how to construct the solution to multi-dimensional reflected BSDE \eqref{eq7} when $f^i$ is independent of $y,y'$ as shown in the proof of Theorem 3.1 in \cite{BH}. 
For each $i=1,\cdots,N$, we define
\begin{align*}
    U^i_t=\E\left[\xi^i+\int_t^T f^i(s) ds\Big|\mathcal{F}^{(N)}_t\right], \ t\in[0,T]. 
\end{align*}
By Proposition \ref{property of LN}, we have $\{L^{(N)}_t(U_t)\}_{t\in[0,T]}\in\mathcal{S}^2(\mathbb{F}^{(N)})$.  Let $S^{(N)}$ be the Snell envelope of the process $\{L^{(N)}_t(U_t)\}_{t\in[0,T]}$, i.e., 
\begin{align*}
    S^{(N)}_t=\esssup_{\tau\in \mathcal{T}^N_{t,T}}\E\left[L^{(N)}_\tau(U_\tau)\big|\mathcal{F}^{(N)}_t\right].
\end{align*}
Since $S$ is an $\mathbb{F}^{(N)}$-supermartingale with $S\in \mathcal{S}^2(\mathbb{F}^{(N)})$, it has the following decomposition
\begin{align*}
    S^{(N)}_t=M^{(N)}_t-K^{(N)}_t,
\end{align*}
where $K^{(N)}\in \mathcal{A}^2(\mathbb{F}^{(N)})$. Now, set 
\begin{align*}
    Y^i_t=U^i_t+S_t
\end{align*}
and let $Z^i\in \mathcal{H}^2(\mathbb{F}^{(N)};\mathbb{R}^N)$ be obtained by the following martingale representation 
\begin{align*}
    \E\left[\theta^i+\int_0^T f^i(s) ds+K^{(N)}_T\Big|\mathcal{F}^{(N)}_t\right]=\E\left[\theta^i+\int_0^T f^i(s) ds+K^{(N)}_T\right]+\int_0^t\sum_{j=1}^N Z^{i,j}_sdB^j_s.
\end{align*}
Then, $(\{Y^i,Z^i\}_{1\leq i\leq N},K^{(N)})$ is the solution to \eqref{eq7}.

Now, we are in a position to prove the main result by a fixed point argument when the time horizon is sufficiently small. Given $V\in \mathcal{S}^2(\mathbb{F}^{(N)};\mathbb{R}^N)$, let $(\{Y^i,Z^i\}_{1\leq i\leq N},K^{(N)})$ stand for the unique solution to the following equation
\begin{equation*}
\begin{cases}
Y^i_t=\theta^i+\int_t^T f^i\left(s,V^i_s,\frac{1}{N}\sum_{j=1}^N V^j_s\right) ds-\int_t^T\sum_{j=1}^N Z^{i,j}_sdB^j_s+K^{(N)}_T-K^{(N)}_t, \ \forall 1\leq i\leq N,\\
 \frac{1}{N}\sum_{i=1}^N h(Y^i_t)\geq 0, \ t\in[0,T], \textrm{ and }
\int_0^T\frac{1}{N}\sum_{i=1}^N h(Y^i_t)dK^{(N)}_t=0.
\end{cases}
\end{equation*}
We define the map $\Gamma:\mathcal{S}^2(\mathbb{F}^{(N)};\mathbb{R}^N)\rightarrow \mathcal{S}^2(\mathbb{F}^{(N)};\mathbb{R}^N)$ as follows
\begin{align*}
    \Gamma(V):=Y.
\end{align*}
Given another $\tilde{V}\in \mathcal{S}^2(\mathbb{F}^{(N)};\mathbb{R}^N)$, let  $\tilde{Y}=\Gamma (\tilde{V})$. We define
\begin{align*}
&U^i_t:=\E\left[\xi^i+\int_t^T f^i\left(s,V^i_s,\frac{1}{N}\sum_{j=1}^N V^j_s\right)ds\Bigg|\mathcal{F}^{(N)}_t\right], \\ &\tilde{U}^i_t:=\E\left[\xi^i+\int_t^T f^i\left(s,\tilde{V}^i_s,\frac{1}{N}\sum_{j=1}^N \tilde{V}^j_s\right)ds\Bigg|\mathcal{F}^{(N)}_t\right].
\end{align*}
Set $\Delta P_t=P_t-\tilde{P}_t$ for $P=Y,U,V$ and $\Delta L^{(N)}_t=L^{(N)}_t(U_t)-L^{(N)}_t(\tilde{U}_t)$. Then, it is easy to check that 
\begin{align*}
    &|\Delta Y^i_t|\leq |\Delta U^i_t|+\E\left[\sup_{s\in[0,T]}|\Delta L^{(N)}_s|\Big|\mathcal{F}^{(N)}_t\right] \\
    \leq& LT\E\left[\sup_{s\in[0,T]}|\Delta V^i_s|\Big|\mathcal{F}^{(N)}_t\right]+LT\E\left[\frac{1}{N}\sup_{s\in[0,T]}\sum_{j=1}^N|\Delta V^j_s|\Big|\mathcal{F}^{(N)}_t\right]+\E\left[\sup_{s\in[0,T]}|\Delta L^{(N)}_s|\Big|\mathcal{F}^{(N)}_t\right].
\end{align*}
Applying Doob's inequality and using the fact that (see Eq. \eqref{LtXLtY})
\begin{align*}
|\Delta L^{(N)}_t|\leq \frac{M}{mN}\sum_{j=1}^N|\Delta U^j_t|,
\end{align*}
we obtain that 
\begin{equation}\begin{split}\label{Delta Yit}
    \E\left[\sup_{t\in[0,T]}|\Delta Y^i_t|^2\right]\leq &12L^2T^2\E\left[\sup_{t\in[0,T]}|\Delta V^i_t|^2\right]+\frac{12M^2}{m^2N^2}\E\left[\left(\sup_{t\in[0,T]}\sum_{j=1}^N|\Delta U^j_t|\right)^2\right]\\
    &+\frac{12 L^2T^2}{N^2}\E\left[\left(\sup_{s\in[0,T]}\sum_{j=1}^N|\Delta V^j_s|\right)^2\right].
\end{split}\end{equation}
It follows from the Lipschitz assumption for $f$ that 
\begin{align*}
    |\Delta U^i_t|\leq LT\E\left[\sup_{s\in[0,T]}|\Delta V^i_s|\Big|\mathcal{F}^{(N)}_t\right]+LT\E\left[\frac{1}{N}\sup_{s\in[0,T]}\sum_{j=1}^N|\Delta V^j_s|\Big|\mathcal{F}^{(N)}_t\right].
\end{align*}
Summing over $i$ yields that 
\begin{align*}
    \sum_{i=1}^N |\Delta U^i_t|\leq 2LT\E\left[\sup_{s\in[0,T]}\sum_{j=1}^N|\Delta V^j_s|\Big|\mathcal{F}^{(N)}_t\right].
\end{align*}
Therefore, we have
\begin{align*}
    \E\left[\left(\sup_{t\in[0,T]}\sum_{j=1}^N|\Delta U^j_t|\right)^2\right]\leq 16L^2T^2 \E\left[\left(\sup_{s\in[0,T]}\sum_{j=1}^N|\Delta V^j_s|\right)^2\right]\leq 16L^2T^2N \E\left[\sup_{s\in[0,T]}\sum_{j=1}^N|\Delta V^j_s|^2\right]. 
\end{align*}
Plugging the above inequality into \eqref{Delta Yit} implies that 
\begin{equation*}\begin{split}
    \E\left[\sup_{t\in[0,T]}|\Delta Y^i_t|^2\right]\leq 12L^2T^2\E\left[\sup_{t\in[0,T]}|\Delta V^i_t|^2\right]+12L^2T^2\left(1+16\frac{M^2}{m^2}\right)\E\left[\frac{1}{N}\sup_{t\in[0,T]}\sum_{j=1}^N|\Delta V^j_t|^2\right].
\end{split}\end{equation*}
Summing over $i$, we finally obtain that 
\begin{equation*}\begin{split}
    \E\left[\frac{1}{N}\sum_{i=1}^N\sup_{t\in[0,T]}|\Delta Y^i_t|^2\right]\leq 24L^2T^2\left(1+8\frac{M^2}{m^2}\right)\E\left[\frac{1}{N}\sup_{t\in[0,T]}\sum_{j=1}^N|\Delta V^j_t|^2\right].
\end{split}\end{equation*}
Choosing $T\leq \varepsilon$ with $\varepsilon>0$ be such that 
\begin{align*}
24L^2 \varepsilon^2\left(1+8\frac{M^2}{m^2}\right)\leq \frac{1}{2},
\end{align*}
then $\Gamma$ is a contraction mapping. Hence, $\Gamma$ has a unique fixed point in $\mathcal{S}^2(\mathbb{F}^{(N)};\mathbb{R}^N)$ when $T\leq \varepsilon$, i.e., there exists a unique $\{Y^i\}_{1\leq i\leq N}$ solving \eqref{eq7} for some $(\{Z^i\}_{1\leq i\leq N},K^{(N)})$ with $K^{(N)}\in \mathcal{A}^2(\mathbb{F}^{(N)})$ and $Z^i\in \mathcal{H}^2(\mathbb{F}^{(N)};\mathbb{R}^N)$ for $i=1,\cdots,N$,  on small time interval $[0,T]$. Moreover, since $\{Y^i\}_{1\leq i\leq N}$ is unique, $\{Z^i\}_{1\leq i\leq N}$ is unique by applying It\^{o}'s formula and finally $K^{(N)}$ is also unique.

For the general case, let $n$ be a positive integer such that $T/n<\varepsilon$. For any $k=0,1,\cdots,n$, set $t_k=\frac{kT}{n}$. For $k=n,n-1,\cdots,1$, let  $(\{Y^{i,k},Z^{i,k}\}_{1\leq i\leq N},, K^{(N),k})$ be the unique solution to \eqref{eq7} on time interval $[t_{k-1},t_k]$ with $K^{(N),k}_{t_{k-1}}=0$ and terminal value 
\begin{align*}
    \xi^{i,k}=\begin{cases} Y^{i,k+1}_{t_k}, &k=n-1,\cdots,1,\\ 
    \theta^i, &k=n.
    \end{cases}
\end{align*}
Set
\begin{align*}%
&Y^i_t=Y^{i,k}_t, \ Z^i_t=Z^{i,k}_t,\ K^{(N)}_t=K^{(N),k}_t+\sum_{l<k}K^{(N),l}_{t_l}, \ t\in[t_{k-1},t_k], \ k=1,2,\cdots,n.
\end{align*}
Then, $(\{Y^i,Z^i\}_{1\leq i\leq N}, K^{(N)})$ is the unique solution to \eqref{eq7}. The proof is complete. 
\end{proof}

The following proposition provides a priori estimates for the solution to \eqref{eq7}.
\begin{proposition}\label{prop4.2}
There exists a constant $C$ independent of $N$, such that, for all $1\leq i\leq N$,
\begin{align}\label{estimateYiZiKN}
\E\left[\sup_{t\in[0,T]}|Y^i_t|^2+\int_0^T|Z^i_s|^2ds+|K^{(N)}_T|^2\right]
\leq& C\left(1+\E\left[|\xi|^2\right]+\E\left[\int_0^T|f(t,0,0)|^2dt\right]\right).
\end{align}
\end{proposition}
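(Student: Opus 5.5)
We exploit the Snell envelope representation of $K^{(N)}$ from the proof of Theorem \ref{thm4.1}, together with the Lipschitz bound \eqref{LtXLtY} on $L^{(N)}$, to reduce everything to estimates on the empirical averages $\frac1N\sum_j\sup_t|Y^j_t|^2$. Exchangeability then converts these averages into single-particle bounds.

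\textbf{Step 1: representation of $K^{(N)}$.} Writing $\bar f^i_s:=f^i(s,Y^i_s,\frac1N\sum_j Y^j_s)$, set
\[
U^i_t=\E\Big[\xi^i+\int_t^T\bar f^i_s\,ds\,\Big|\,\mathcal F^{(N)}_t\Big].
\]
Applying the construction of Theorem \ref{thm4.1} with the frozen generator $\bar f^i$ (which is legitimate by uniqueness) gives $Y^i=U^i+S^{(N)}$, where $S^{(N)}$ is the Snell envelope of $L^{(N)}_\cdot(U_\cdot)$. The same construction gives $K^{(N)}_T\le \sup_{t}S^{(N)}_t+\sup_t|M^{(N)}_t-M^{(N)}_0|$. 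It is easier to use the cruder bounds
\[
0\le S^{(N)}_t\le \E\Big[\sup_s L^{(N)}_s(U_s)\,\Big|\,\mathcal F^{(N)}_t\Big],
\qquad
L^{(N)}_s(U_s)\le x_s+\frac{M}{m}\frac1N\sum_j|U^j_s|
\]
from \eqref{LtSt}. With $|\bar f^i_s|\le |f^i(s,0,0)|+L|Y^i_s|+\frac LN\sum_j|Y^j_s|$, Doob's inequality gives
\[
\E\Big[\sup_t|Y^i_t|^2\Big]\le C\Big(1+\E|\xi^i|^2+\E\!\int_0^T\!|f^i(s,0,0)|^2ds+\frac1N\sum_j\E|\xi^j|^2+\dots\Big)+CT\!\int_0^T\!\Big(\E|Y^i_s|^2+\frac1N\sum_j\E|Y^j_s|^2\Big)ds .
\]

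\textbf{Step 2: symmetry and Gronwall.} This is the step I expect to need the most care. The data $(\xi^i,f^i,B^i)$ are i.i.d., and the system \eqref{eq7} is symmetric under permutations of the indices. By uniqueness in Theorem \ref{thm4.1}, the law of $(Y^i)_i$ is therefore exchangeable, so $\E[\sup_t|Y^i_t|^2]$ does not depend on $i$, and $\E|\xi^i|^2=\E|\xi|^2$. To get a Gronwall-type estimate rather than only a small-$T$ one, I would not take the sup over $[0,T]$ at once. Instead I work on $[t,T]$: apply the conditional estimates with the Snell envelope restricted to $[t,T]$, bound $\E|Y^i_t|^2$ by $C(1+\E|\xi|^2+\E\int|f(s,0,0)|^2)+C\int_t^T\E|Y^1_s|^2ds$, and conclude by backward Gronwall. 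Here $x_s$ is bounded because it is continuous on $[0,T]$. If that is awkward, the fallback is to partition $[0,T]$ into intervals of length $\varepsilon$ as in the proof of Theorem \ref{thm4.1} and iterate the small-interval bound, which gives a constant depending only on $T,L,M,m$ and $\sup_s x_s$, not on $N$. The sup bound then follows from Step 1 and Doob's inequality.

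\textbf{Step 3: $K^{(N)}$ and $Z^i$.} For $K^{(N)}_T$, I use $K^{(N)}_T=Y^i_0-\theta^i-\int_0^T\bar f^i_sds+\int_0^T\sum_jZ^{i,j}_sdB^j_s$ averaged over $i$, or directly the Doob–Meyer decomposition $\E|K^{(N)}_T|^2\le C\E[\sup_t|S^{(N)}_t|^2]$ (a standard estimate for the increasing part of a square-integrable supermartingale of class (D)). The latter is controlled by $\E[\sup_s L^{(N)}_s(U_s)^2]$ and hence by Step 2. The term $\theta^i$ needs $\E|L^{(N)}_T(\xi^{(N)})|^2\le C(1+\E|\xi|^2)$, which follows from \eqref{LtSt}. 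Finally, for $Z^i$ I apply Itô's formula to $|Y^i_t|^2$:
\[
\E\int_0^T|Z^i_s|^2ds\le \E|\theta^i|^2+2\E\int_0^T|Y^i_s||\bar f^i_s|ds+2\E\Big[\sup_t|Y^i_t|\,K^{(N)}_T\Big],
\]
and conclude by Young's inequality. All constants depend only on $T,L,M,m$ and $h$ through $\sup_s x_s$, so they are independent of $N$, which gives \eqref{estimateYiZiKN}.
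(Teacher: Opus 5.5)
Your proof is correct. Its skeleton matches the paper's: the representation $Y^i=U^i+S^{(N)}$, the bound \eqref{LtSt}, Doob's inequality on $[t,T]$, backward Gronwall, and then It\^{o}'s formula for $Z^i$. It departs from the paper in two places.

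\textbf{Closing the Gronwall loop.} You invoke exchangeability of the law of $(Y^i)_i$. The paper instead sums the inequality for $\E|Y^i_t|^2$ over $i$, applies Gronwall to $\frac1N\sum_i\E|Y^i_t|^2$, and feeds the result back into the single-particle inequality for a second Gronwall. That device only needs $\xi^i$ and $f^i(\cdot,0,0)$ to be identically distributed. Your route needs the triples $(\xi^i,f^i,B^i)$ to be i.i.d. as triples. It also needs the fact that the solution is one fixed measurable functional of the driving noises, so that permuting the $B^i$ permutes the solution in law. Pathwise uniqueness gives this only once you note that the Snell-envelope/Picard construction defines such a Borel map on Wiener space. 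The averaging trick therefore removes exactly the step you flagged as delicate.

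\textbf{Bounding $K^{(N)}_T$.} You use the energy estimate $\E|K^{(N)}_T|^2\le C\,\E[\sup_t|S^{(N)}_t|^2]$ for the Doob--Meyer increasing part of the continuous supermartingale $S^{(N)}\in\mathcal S^2(\mathbb F^{(N)})$. This decouples $K^{(N)}$ from $Z^i$. The paper instead writes $K^{(N)}_T$ through the equation, so its bound contains $\int_0^T|Z^i_s|^2ds$. It then estimates $2\E[\sup_t|Y^i_t|\,K^{(N)}_T]\le 2c_1\E[\sup_t|Y^i_t|^2]+\frac{1}{2c_1}\E|K^{(N)}_T|^2$ in It\^{o}'s formula and absorbs. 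Both routes work, and the energy estimate is itself proved by the same It\^{o}-and-absorb argument applied to $(S^{(N)})^2$. Your first option, averaging the equation for $K^{(N)}_T$ over $i$, would be circular on its own without such an absorption step. You need the Doob--Meyer version, or the paper's absorption, to close it.
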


\begin{proof}
Set 
\begin{align*}
\bar{U}^i_t=\E\left[\xi^i+\int_t^T f^i\left(s,Y^{i}_s,\frac{1}{N}\sum_{j=1}^N Y^j_s\right)ds\Bigg|\mathcal{F}^{(N)}_t\right].
\end{align*}
Let $\bar{S}$ be the Snell envelope for $\{L^{(N)}_t(\bar{U}_t)\}_{t\in[0,T]}$. By the proof of Theorem \ref{thm4.1}, we have
\begin{align}\label{rep Yi}
Y^i_t=\bar{U}^i_t+\bar{S}_t=\bar{U}^i_t+\esssup_{\tau\in\mathcal{T}^N_{t,T}}\E\left[L^{(N)}_\tau(\bar{U}_\tau)\Big|\mathcal{F}^{(N)}_t\right].
\end{align}
It follows that, for any $t\in[0,T]$, 
\begin{equation}\begin{split}\label{yit}
|Y^i_t|\leq &\E\left[|\xi^i|+\int_t^T |f^i(s,0,0)|ds+L\int_t^T|Y^i_s|ds\Big|\mathcal{F}^{(N)}_t \right]\\
&+\E\left[\frac{L}{N}\int_t^T\sum_{j=1}^N |Y^j_s|ds\Big|\mathcal{F}^{(N)}_t\right]+\E\left[\sup_{s\in[t,T]}|L^{(N)}_s(\bar{U}_s)|\Big|\mathcal{F}^{(N)}_t\right].
\end{split}\end{equation}
Recalling Eq. \eqref{LtSt}, we have
\begin{align*}
|L^{(N)}_t(\bar{U}_t)|\leq |x_t|+\frac{M}{mN}\sum_{j=1}^N|\bar{U}^j_t|.
\end{align*}
where $x_t=\inf\{x\geq 0:h(t,x)\geq 0\}$. Applying Doob's inequality and H\"{o}lder's inequality, there exists a constant $C$ independent of $N$, such that 
\begin{equation}\label{barpsin}\begin{split}
&\E\left[\sup_{s\in[t,T]}|L^{(N)}_s(\bar{U}_s)|^2\right]\leq C\left(1+\E\left[|\xi|^2+\int_0^T |f(s,0,0)|^2ds\right]+\E\left[\int_t^T\frac{1}{N}\sum_{j=1}^N|Y^j_s|^2ds\right]\right).
\end{split}\end{equation}
By Eqs. \eqref{yit} and \eqref{barpsin}, we have
\begin{align}\label{yit'}
\E[|Y^i_t|^2]\leq C\left(1+\E\left[|\xi|^2+\int_0^T |f(s,0,0)|^2ds\right]+\E\left[\int_t^T\frac{1}{N}\sum_{j=1}^N|Y^j_s|^2ds\right]+\E\left[\int_t^T |Y^i_s|^2ds\right]\right).
\end{align}
Summing over $i$ implies that
\begin{align*}
\E\left[\frac{1}{N}\sum_{i=1}^N|Y^i_t|^2\right]\leq C\left(1+\E\left[|\xi|^2+\int_0^T |f(s,0,0)|^2ds\right]+\int_t^T\E\left[\frac{1}{N}\sum_{j=1}^N|Y^j_s|^2\right]ds\right).
\end{align*}
It follows from the Gronwall inequality that 
\begin{align}\label{mean of Yi}
\E\left[\frac{1}{N}\sum_{i=1}^N|Y^i_t|^2\right]\leq C\left(1+\E\left[|\xi|^2+\int_0^T |f(s,0,0)|^2ds\right]\right).
\end{align}
Plugging the above inequality into \eqref{barpsin}, \eqref{yit'} and using the Gronwall inequality again, we obtain that 
\begin{equation}\label{barpsinyit}\begin{split}
\E\left[\sup_{s\in[0,T]}|L^{(N)}_s(\bar{U}_s)|^2\right]&\leq C\left(1+\E\left[|\xi|^2+\int_0^T |f(s,0,0)|^2ds\right]\right),\\
\E\left[|Y^i_t|^2\right]&\leq C\left(1+\E\left[|\xi|^2+\int_0^T |f(s,0,0)|^2ds\right]\right).
\end{split}\end{equation}
Finally, combining \eqref{yit}, \eqref{mean of Yi} and \eqref{barpsinyit}, applying Doob's inequality and H\"{o}lder's inequality, we obtain 
\begin{align}\label{estimateYi}
\E\left[\sup_{t\in[0,T]}|Y^i_t|^2\right]
\leq& C\left(1+\E\left[|\xi|^2\right]+\E\left[\int_0^T|f(t,0,0)|^2dt\right]\right).
\end{align}

Note that for each $1\leq i\leq N$, we have
   \begin{align*}
       K^{(N)}_t=Y^i_0-Y^i_T-\int_0^T f^i\left(s,Y^i_s,\frac{1}{N}\sum_{i=1}^N Y^i_s\right) ds+\int_0^T\sum_{j=1}^N Z^{i,j}_sd B^j_s.
   \end{align*}
   Then, there exists a constant $c_1$ independent of $N$, such that  
   \begin{align}\label{estiKN}
       \E\left[|K^{(N)}_T|^2\right]\leq c_1\E\left[\sup_{t\in[0,T]}|Y^i_t|^2+\int_0^T|f^i(s,0,0)|^2ds+\int_0^T|Z^i_s|^2ds+\frac{1}{N}\int_0^T\sum_{i=1}^N|Y^i_s|^2ds\right].
   \end{align}
   Applying It\^{o}'s formula to $|Y^i_t|^2$ and recalling that $\theta^i=\xi^i+L^{(N)}_T(\xi)=\xi^i+L^{(N)}_T(\bar{U}_T)$, we obtain 
   \begin{align*}
       \E\left[\int_0^T |Z^i_t|^2dt\right]\leq &\E[|\theta^i|^2]+2\E\left[\int_0^T Y^i_s f^i\left(s,Y^i_s,\frac{1}{N}\sum_{i=1}^N Y^i_s\right) ds\right]+2\E\left[\int_0^T Y^i_s dK^{(N)}_s\right]\\
       \leq & 2\E[|\xi^i|^2]+2\E\left[|L^{(N)}_T(\bar{U}_T)|^2\right]+\E\left[\int_0^T|f^i(s,0,0)|^2ds\right]+\E\left[\int_t^T\frac{1}{N}\sum_{i=1}^N|Y^i_s|^2ds\right]\\
       &+\left(2c_1+(1+2L+L^2)T\right)\E\left[\sup_{t\in[0,T]}|Y^i_t|^2\right]+\frac{1}{2c_1}\E\left[|K^{(N)}_T|^2\right].
   \end{align*}
   Plugging Eq. \eqref{estiKN} into the above inequality and using Eqs. \eqref{mean of Yi}-\eqref{estimateYi},  we obtain that 
   \begin{align}\label{estiZi}
       \E\left[\int_0^T |Z^i_t|^2dt\right]
\leq C\left(1+\E\left[|\xi|^2\right]+\E\left[\int_0^T|f(t,0,0)|^2dt\right]\right).
   \end{align}
   Finally, plugging Eqs. \eqref{mean of Yi}, \eqref{estimateYi} and \eqref{estiZi} into Eq. \eqref{estiKN}, we obtain the desired result.
\end{proof}

\begin{remark}\label{rem distribution}
Suppose that $f$ satisfies (H2') (see Remark \ref{rem MFMRBSDE}) and is independent of $z$ and $\nu$. Consider the following particle system
\begin{equation*}
\begin{cases}
Y^i_t=\theta^i+\int_t^T f^i\left(s,Y^i_s,\frac{1}{N}\sum_{j=1}^N \delta_{Y^j_s}\right) ds-\int_t^T\sum_{j=1}^N Z^{i,j}_sdB^j_s+K^{(N)}_T-K^{(N)}_t, \ \forall 1\leq i\leq N,\\
 \frac{1}{N}\sum_{i=1}^N h(t,Y^i_t)\geq 0, \ t\in[0,T], \textrm{ and }
\int_0^T\frac{1}{N}\sum_{i=1}^N h(t,Y^i_t)dK^{(N)}_t=0,
\end{cases}
\end{equation*}
By a similar analysis as the proof of Theorem \ref{thm4.1}, it has a unique solution $(\{Y^i,Z^i\}_{1\leq i\leq N},K^{(N)})$ with $K^{(N)}\in \mathcal{A}^2(\mathbb{F}^{(N)})$, $Y^i\in \mathcal{S}^2(\mathbb{F}^{(N)})$ and $Z^i\in \mathcal{H}^2(\mathbb{F}^{(N)};\mathbb{R}^N)$ for $i=1,\cdots,N$. Moreover, the same result as in Proposition \ref{prop4.2} still holds.
\end{remark}

\subsection{Propagation of chaos}

In this subsection, we use the solution of the interacting particle system \eqref{eq7} to approximate the solution of mean-field BSDE with mean reflection \eqref{nonlinearyz} and establish the rate of convergence when the generator $f$ does not depends on $z,z'$. For this purpose, let $\xi$, $f$ (independent of $z$), $\{\xi^i\}_{1\leq i\leq N}$, $\{f^i\}_{1\leq i\leq N}$ be given as in Remark \ref{remarkxi}. Let $(\bar{Y}^i,\bar{Z}^i,K)$ be the solution to the following mean-field mean reflected BSDE:
\begin{displaymath}
\begin{cases}
\bar{Y}^i_t=\xi^i+\int_t^T f^i(s,\bar{Y}^i_s,\E[\bar{Y}^i_s])ds-\int_t^T \bar{Z}^i_s dB^i_s+K_T-K_t, \\
\E[h(t,\bar{Y}^i_t)]\geq 0, t\in[0,T] \textrm{ and} \int_0^T \E[h(t,\bar{Y}^i_t)]dK_t=0.
\end{cases}
\end{displaymath}
Clearly, $(\bar{Y}^i,\bar{Z}^i,K)$, $1\leq i\leq N$ are independent copies of $(Y,Z,K)$, the solution to \eqref{nonlinearyz}.

We define
\begin{align*}
\widetilde{U}^i_t=\E\left[\xi^i+\int_t^T\bar{f}^i_sds \bigg|\mathcal{F}^i_t\right],
\end{align*}
where 
\begin{align*}
  \bar{f}^i_s=f^i\left(s,\bar{Y}^i_s,\E[\bar{Y}^i_s]\right).  
\end{align*}
 Since the Brownian motions $\{B^i\}_{1\leq i\leq N}$ are independent, we have  
\begin{align*}
\widetilde{U}^i_t=\E\left[\xi^i+\int_t^T\bar{f}^i_sds \bigg|\mathcal{F}^{(N)}_t\right].
\end{align*}
For each $t\in[0,T]$, let $L_t(\widetilde{U}^i_t)\in [0,\infty)$ be defined as follows
\begin{align*}
    L_t(\widetilde{U}^i_t):=\inf\{x\geq 0: \E[h(t,x+\widetilde{U}^i_t)]\geq0\}.
\end{align*}
For any $X\in L^2(\mathcal{F}^{(N)}_T)$, it is easy to check that the mapping $x\mapsto\E[h(t,x+X)]$ is a one-to-one correspondence. Therefore, the following equation
\begin{align}\label{tildeL}
    \E[h(t,x+\widetilde{U}^i_t)]=0
\end{align}
admits a unique solution, which is denoted by $\tilde{L}_t(\widetilde{U}^i_t)$.
By the proof of Proposition 7 in \cite{BEH}, we have 
\begin{align*}
    K_T-K_t=\sup_{s\in[t,T]}L_s(\widetilde{U}^i_s)=\sup_{s\in[t,T]}(\tilde{L}_s(\widetilde{U}^i_s))^+.
\end{align*}
It follows that 
\begin{align}\label{baryi}
\bar{Y}^i_t=\E\left[\xi^i+\int_t^T\bar{f}^i_sds \bigg|\mathcal{F}^{(N)}_t\right]+K_T-K_t=\widetilde{U}^i_t+\esssup_{\tau\in \mathcal{T}^N_{t,T}}\E\left[L_\tau(\widetilde{U}^i_\tau)|\mathcal{F}^{(N)}_t\right].
\end{align}
Before introducing the main result in this subsection, we first provide the following technical lemma, whose proof may be referred to the one of Lemma 5.2 in \cite{LS'}.

\begin{lemma}\label{Lip}
    Suppose that $h\in C^{1,2}_b([0,T]\times\mathbb{R})$ and $\sup_{t\in[0,T]}\E[|\bar{Z}^1_t|^2]<\infty$. Then, the map $t\mapsto \tilde{L}_t(\widetilde{U}^i_t)$ is Lipschitz continuous. 
\end{lemma}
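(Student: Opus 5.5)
Throughout I write $\ell_t:=\tilde{L}_t(\widetilde{U}^i_t)$ and define $\Phi(t,x):=\E[h(t,x+\widetilde{U}^i_t)]$. The plan is to apply an implicit-function-type comparison to the defining relation \eqref{tildeL}, which says $\Phi(t,\ell_t)=0$.

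By (H4), $x\mapsto \Phi(t,x)$ is strictly increasing with slope at least $m$. Hence, for $s,t\in[0,T]$,
\begin{align*}
m|\ell_t-\ell_s|\leq |\Phi(t,\ell_t)-\Phi(t,\ell_s)|=|\Phi(s,\ell_s)-\Phi(t,\ell_s)|,
\end{align*}
exactly as in \eqref{LtXLsX}. It therefore suffices to show that $t\mapsto\Phi(t,x)$ is Lipschitz, uniformly in $x$. Once that holds, we get $|\ell_t-\ell_s|\leq C|t-s|/m$.

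To bound $\Phi(t,x)-\Phi(s,x)$ for fixed $x$, I will use the dynamics of $\widetilde{U}^i$. By martingale representation with respect to $\mathbb{F}^i$,
\begin{align*}
\widetilde{U}^i_t=\widetilde{U}^i_0-\int_0^t\bar f^i_r\,dr+\int_0^t \bar Z^i_r\,dB^i_r.
\end{align*}
The integrand here is exactly $\bar{Z}^i$: since $\bar Y^i=\widetilde U^i+K_T-K_\cdot$ and $K$ is deterministic, $\widetilde U^i$ and $\bar Y^i$ share the same martingale part.

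Because $h\in C^{1,2}_b$, Itô's formula applied to $h(r,x+\widetilde{U}^i_r)$ between $s$ and $t$ gives, after taking expectations,
\begin{align*}
\Phi(t,x)-\Phi(s,x)=\E\int_s^t\Big[\partial_r h-\partial_y h\,\bar f^i_r+\tfrac12\partial_{yy}h\,|\bar Z^i_r|^2\Big](r,x+\widetilde U^i_r)\,dr.
\end{align*}
The stochastic integral has zero expectation. Indeed, $\partial_y h$ is bounded and $\bar Z^i\in\mathcal H^2$, so it is a true martingale.

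Using boundedness of $\partial_r h$, $\partial_y h$ and $\partial_{yy}h$, the bracket is bounded by
\begin{align*}
C\Big(1+\sup_r\E|\bar f^i_r|+\sup_r\E|\bar Z^i_r|^2\Big)|t-s|,
\end{align*}
uniformly in $x$.

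The term $\sup_r\E|\bar Z^i_r|^2=\sup_r\E|\bar Z^1_r|^2$ is finite by hypothesis. This is precisely the role of the assumption $\sup_{t\in[0,T]}\E[|\bar{Z}^1_t|^2]<\infty$.

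The main obstacle is the drift term. We need $\sup_r\E|\bar f^i_r|<\infty$, and only $\bar f^i\in\mathcal H^2$ is automatic. By (H2),
\begin{align*}
|\bar f^i_r|\leq |f^i(r,0,0)|+L|\bar Y^i_r|+L\E|\bar Y^i_r|,
\end{align*}
so the $\bar Y$ contributions are controlled through $\bar Y\in\mathcal S^2$. That leaves $\sup_r\E|f(r,0,0)|$, for which I would either impose the mild condition that it is finite or note that the cited Lemma 5.2 of \cite{LS'} assumes it.

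If I want to avoid this pointwise condition, I would try a fallback. The drift contribution is at most $\|\partial_y h\|_\infty\int_s^t\E|\bar f^i_r|\,dr$, which gives absolute continuity, and Lipschitz continuity follows as soon as $r\mapsto\E|f(r,0,0)|$ is bounded. I expect the paper's framework (cf. Remark \ref{remarkxi}) to allow this to be assumed.

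Combining the monotonicity comparison with this uniform time-Lipschitz bound yields the claimed Lipschitz continuity of $t\mapsto\tilde L_t(\widetilde U^i_t)$.
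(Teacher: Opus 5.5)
Your argument is the intended one. The paper defers the proof to Lemma 5.2 of \cite{LS'}, and its ingredients are the bi-Lipschitz comparison through \eqref{tildeL} (as in \eqref{LtXLsX}) followed by It\^{o}'s formula for $h(r,x+\widetilde{U}^i_r)$, whose martingale part is driven by $\bar{Z}^i$ because $K$ is deterministic.

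Your caveat on the drift is a genuine defect of the statement, not of your method. Take $h(t,y)=y$, $\xi=0$ and $f(t,y,y')=g(t)$ with $g(t)=|t-T/2|^{-1/4}$, which is square integrable but unbounded. Then (H1)--(H5) and the lemma's hypotheses all hold, with $K\equiv 0$ and $\bar{Z}^1\equiv 0$. Yet $\tilde{L}_t(\widetilde{U}^i_t)=-\int_t^T g(s)\,ds$ satisfies $\ell_{T/2+\delta}-\ell_{T/2}=\tfrac43\delta^{3/4}$, so it is not Lipschitz. Hence a condition such as $\sup_{t\in[0,T]}\E[|f(t,0,0)|]<\infty$ must be added. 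Without it, your fallback bound $|\ell_t-\ell_s|\leq \frac{C}{m}\int_s^t\bigl(1+\E[|\bar{f}^i_r|]+\E[|\bar{Z}^i_r|^2]\bigr)\,dr$ already gives an absolutely continuous $\ell$ with square-integrable derivative. That is all Case 1 of Theorem \ref{thm4.3} actually needs.
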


\begin{remark}
(i) Fix $p\geq 2$. Given $\xi\in L^p(\mathcal{F}_T)$ (i.e., $\E[|\xi|^p]<\infty$) and $f$ satisfying (H2) and  
\begin{align*}
    \E\left[\int_0^T|f(t,0,0,0,0)|^pdt\right]<\infty,
\end{align*}
consider the following mean-field BSDE
\begin{align}\label{MFBSDE}
    Y_t=\xi+\int_t^T f(s,Y_s,\mathbf{E}[Y_s],Z_s,\mathbf{E}[Z_s])ds-\int_t^T Z_s dB_s. 
\end{align}
Moreover, suppose that 
\begin{itemize}
    \item[(1)] $f$ is continuously differentiable in $y$ and $z$ with uniformly bounded derivative. 
    \item[(2)] $\xi$ and $f(\cdot, y,y',z,z')$ are Malliavin differentiable for each $y,y',z,z'$ with 
    \begin{itemize}
        \item [(i)] $\sup_{\theta \in[0,T]}\mathbf{E}\left[|D_{\theta}\xi|^p\right]<\infty$;
        \item [(ii)] $\sup_{\theta \in[0,T]}\mathbf{E}\left[\left(\int_{0}^{T}|D_{\theta}f(t,Y_t,\mathbf{E}[Y_t],Z_t,\mathbf{E}[Z_t])|dt\right)^{p}\right]<\infty$.
    \end{itemize}   
\end{itemize}
Then, we have $\mathbf{E}\left[\sup_{t\in[0,T]}\left|Z_t\right|^p\right]<\infty$ (see Lemma 5.3 in \cite{LS'}).  

\noindent(ii) Given a deterministic function $A$, suppose that $(Y,Z)$ satisfies the following equation \begin{align*}
    Y_t=\xi+\int_t^T f(s,Y_s,\mathbf{E}[Y_s],Z_s,\mathbf{E}[Z_s])ds-\int_t^T Z_s dB_s+A_T-A_t.
\end{align*}
Under the same assumptions as in (i), we have $\mathbf{E}\left[\sup_{t\in[0,T]}\left|Z_t\right|^p\right]<\infty.$

\noindent (iii) Suppose that the parameters for the mean-field BSDE \eqref{MFBSDE} are given by  $\xi=g(X_T)$ and  $f(t,y,y',z,z')=F(X_t,y,y',z,z')$, where $X_t$ satisfies the following SDE:
    \begin{equation*}
        X_t=x_0+\int_{0}^{t}b(X_s)ds+\int_{0}^{t}\sigma(X_s)dB_s,
    \end{equation*}
 and the functions $b,\sigma,F,g$ are deterministic and continuously differentiable such that $\partial_xb$, $\partial_{x}\sigma$,  $\partial_{x}g$, $\partial_{x}F$,  $\partial_{y}F$, $\partial_{z}F$ are bounded. Then, the assumptions in (i) are satisfied. Therefore, we can derive 
  $\mathbf{E}\left[\sup_{t\in[0,T]}\left|Z_t\right|^p\right]<\infty.$ 
 \end{remark}

Now, we are ready to present the main result in this subsection.
\begin{theorem}\label{thm4.3}
For any $1\leq i\leq N$, set 
\begin{align*}
\hat{Y}^i:=Y^i-\bar{Y}^i, \ \hat{K}:=K^{(N)}-K,\ \hat{Z}^i:=Z^i-\bar{Z}^i e_i,
\end{align*}
where $(e_1,\cdots,e_N)$ is the canonical basis in $\mathbb{R}^N$.

\begin{itemize}
\item[1.] If $h\in C^{1,2}_b([0,T]\times\mathbb{R})$ and $\sup_{t\in[0,T]}\E[|\bar{Z}^1_t|^4]<\infty$, then there exists a constant $C$ independent of $N$, such that
\begin{align*}
\E\left[\sup_{t\in[0,T]}|\hat{Y}^i_t|^2\right]\leq CN^{-1}, \ \E\left[\sup_{t\in[0,T]}|\hat{K}_t|^2\right]\leq CN^{-1/2}, \ \E\left[\int_0^T|\hat{Z}^i_t|^2dt\right]\leq CN^{-1/2}.
\end{align*}
\item[2.] If $\xi$ is $p$-integrable, $\E[\int_0^T |f(t,0,0)|^pdt]<\infty$ and $\sup_{t\in[0,T]}\E[|\bar{Z}^1_t|^p]<\infty$ for some $p>4$, then there exists a constant $C$ independent of $N$, such that
\begin{align*}
\E\left[\sup_{t\in[0,T]}|\hat{Y}^i_t|^2\right]\leq CN^{-1/2}, \ \E\left[\sup_{t\in[r,T]}|\hat{K}_t|^2\right]\leq CN^{-1/4}, \ \E\left[\int_0^T|\hat{Z}^i_t|^2dt\right]\leq CN^{-1/4}.
\end{align*}
\end{itemize}
\end{theorem}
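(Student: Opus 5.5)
We compare the two Snell-envelope representations \eqref{rep Yi} and \eqref{baryi}. Write
\begin{align*}
\hat{Y}^i_t=(\bar U^i_t-\widetilde U^i_t)+\Big(\esssup_{\tau}\E[L^{(N)}_\tau(\bar U_\tau)|\mathcal F^{(N)}_t]-\esssup_{\tau}\E[L_\tau(\widetilde U^i_\tau)|\mathcal F^{(N)}_t]\Big).
\end{align*}
The first term is controlled by the Lipschitz property (H2):
\begin{align*}
|\bar U^i_t-\widetilde U^i_t|\le L\,\E\Big[\int_t^T\Big(|\hat Y^i_s|+\frac1N\sum_j|\hat Y^j_s|+\Big|\frac1N\sum_j\bar Y^j_s-\E[\bar Y^1_s]\Big|\Big)ds\Big|\mathcal F^{(N)}_t\Big].
\end{align*}
Since the $\bar Y^j_s$ are i.i.d., the last term has $L^2$-norm of order $N^{-1/2}$. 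For the second term, we bound the difference of Snell envelopes by $\E[\sup_{s\in[t,T]}|L^{(N)}_s(\bar U_s)-L_s(\widetilde U^\cdot_s)|\,|\mathcal F^{(N)}_t]$. Note that $L_s(\widetilde U^i_s)$ is deterministic and independent of $i$. We then split
\begin{align*}
|L^{(N)}_s(\bar U_s)-L_s(\widetilde U^1_s)|\le |L^{(N)}_s(\bar U_s)-L^{(N)}_s(\widetilde U_s)|+|L^{(N)}_s(\widetilde U_s)-L_s(\widetilde U^1_s)|.
\end{align*}
By Proposition \ref{property of LN}(i), the first piece is at most $\frac{M}{mN}\sum_j|\bar U^j_s-\widetilde U^j_s|$, which feeds back into the Gronwall-type loop. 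For the second piece we use the defining equations \eqref{tildeLN} and \eqref{tildeL}. Set $x_s=\tilde L_s(\widetilde U^1_s)$. The bi-Lipschitz property of $h$ gives
\begin{align*}
|\tilde L^{(N)}_s(\widetilde U_s)-x_s|\le \frac1m\Big|\frac1N\sum_{j=1}^N h(s,\widetilde U^j_s+x_s)-\E[h(s,\widetilde U^1_s+x_s)]\Big|=:\frac1m|\Delta_s|,
\end{align*}
and $(\cdot)^+$ is 1-Lipschitz. Hence everything reduces to estimating $\E[\sup_{s\in[0,T]}|\Delta_s|^2]$, a uniform-in-time law of large numbers for the i.i.d. processes $\widetilde U^j$.

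Granting such a bound, the estimates close as in the proof of Proposition \ref{prop4.2}. Doob's inequality gives
\begin{align*}
\E\Big[\sup_t|\hat Y^i_t|^2\Big]\le C\int_0^T\Big(\E\Big[\sup_{r\ge s}|\hat Y^i_r|^2\Big]+\frac1N\sum_j\E\Big[\sup_{r\ge s}|\hat Y^j_r|^2\Big]\Big)ds+CN^{-1}+C\,\E\Big[\sup_s|\Delta_s|^2\Big].
\end{align*}
We average over $i$, apply Gronwall, then return to fixed $i$. For $\hat K$ we use $K^{(N)}_T-K^{(N)}_t=\sup_{s\ge t}L^{(N)}_s(\bar U_s)$ up to the Snell-envelope structure. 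Alternatively, and more robustly, we write $\hat K_t$ from the equation as $\hat Y^i_0-\hat Y^i_t$ minus the generator difference plus a stochastic integral. For $\hat Z^i$ we apply Itô's formula to $|\hat Y^i_t|^2$. The cross term $\E\int\hat Y^i\,d\hat K$ is only bounded by $\|\hat Y\|_{\mathcal S^2}\|\hat K\|$, which is why the rates for $\hat K$ and $\hat Z$ are the square roots of that for $\hat Y$. To make this work we first bound $\E[\sup|\hat K|^2]\le C(\E[\sup|\hat Y^i|^2])^{1/2}+\dots$ via the equation averaged over $i$, which kills the martingale part up to averaging.

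\textbf{Main obstacle: the supremum bound on $\Delta$.} In case 1 we exploit smoothness. By Lemma \ref{Lip}, $s\mapsto x_s$ is Lipschitz. Each $\widetilde U^j$ is an Itô process, $d\widetilde U^j_s=-\bar f^j_s\,ds+\bar Z^j_s\,dB^j_s$. Itô's formula applied to $h(s,\widetilde U^j_s+x_s)$, with $h\in C^{1,2}_b$, writes $\Delta_s$ as $\Delta_0$ plus an averaged drift term $\frac1N\sum_j\int_0^s(\phi^j_r-\E\phi^j_r)dr$ plus a martingale $\frac1N\sum_j\int_0^s\partial_yh\,\bar Z^j_r\,dB^j_r$. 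The martingale has supremum of order $N^{-1/2}$ in $L^2$ by BDG and independence. The drift is centered i.i.d., and its sup over $s$ is bounded by $\int_0^T|\frac1N\sum_j(\phi^j_r-\E\phi^j_r)|dr$, whose square has expectation $\le CN^{-1}$ provided $\sup_r\E|\phi^1_r|^2<\infty$. Here $\phi$ contains $\partial_{yy}h\,|\bar Z|^2$ and $\bar f$, so this is exactly where $\sup_t\E|\bar Z^1_t|^4<\infty$ is used. The drift also involves $\dot x_r$, which is bounded by Lemma \ref{Lip}. This yields $\E\sup|\Delta|^2\le CN^{-1}$.

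In case 2, $h$ is only Lipschitz, and we instead use the uniform Wasserstein estimate
\begin{align*}
|\Delta_s|\le M\,W_1\Big(\frac1N\sum_j\delta_{\widetilde U^j_s},\P_{\widetilde U^1_s}\Big).
\end{align*}
A chaining or time-discretization argument then uses the $p$-moments with $p>4$ to control time increments via $\bar Z$, together with Fournier--Guillin-type rates. The target is $\E\sup_s W_1^2\le CN^{-1/2}$. This case is the step most likely to need care: one discretizes $[0,T]$ with mesh $N^{-\alpha}$ and balances the grid error, bounded through $p$-th moments of $\sup$-increments, against a union bound over grid points.
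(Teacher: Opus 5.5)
Your proposal follows the paper's proof closely. The comparison of the Snell-envelope representations \eqref{rep Yi} and \eqref{baryi}, the split through $L^{(N)}_s(\widetilde U_s)$ with Proposition \ref{property of LN}(i) feeding a Gronwall loop (first averaged over $i$, then for fixed $i$), and the reduction via $\tilde L^{(N)}$, $\tilde L$ and the bi-Lipschitz property to a uniform-in-time law of large numbers are exactly the paper's Steps 2 and 3. So are It\^o's formula with Lemma \ref{Lip} in case 1 and the uniform $W_1$ estimate in case 2. For the latter, the paper uses the increment bound $\E[|\widetilde U^i_s-\widetilde U^i_t|^q]\le C|t-s|^{q/2}$ and Theorem 3.3 of \cite{BCGL}, which is the discretization argument you sketch.

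The part that does not work as written is your treatment of $\hat K$ and $\hat Z$.

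First, the identity $K^{(N)}_T-K^{(N)}_t=\sup_{s\ge t}L^{(N)}_s(\bar U_s)$ is false. $K^{(N)}$ is the random increasing part of the Doob--Meyer decomposition of the Snell envelope; only the deterministic $K$ admits such a formula.

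More importantly, averaging the equation over $i$ does not remove the martingale part. It leaves $\int_0^t\sum_j(\frac{1}{N}\sum_i\hat Z^{i,j}_s)dB^j_s$, which is controlled only by $\frac{1}{N}\sum_i\E[\int_0^T|\hat Z^i_s|^2ds]$. So bounding $\hat K$ before $\hat Z$ is circular.

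You do not need a rate for $\hat K$ to handle the cross term in It\^o's formula for $|\hat Y^i|^2$. Since $K^{(N)}$ and $K$ are nondecreasing, $|\int_0^T\hat Y^i_sd\hat K_s|\le\sup_t|\hat Y^i_t|(K^{(N)}_T+K_T)$. The $N$-uniform bound $\E[|K^{(N)}_T|^2]\le C$ from Proposition \ref{prop4.2} then gives $\E[\int_0^T|\hat Z^i_t|^2dt]\le C(\E[\sup_t|\hat Y^i_t|^2])^{1/2}$, after bounding the remaining terms crudely with the a priori estimates.

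Only after that should you read $\hat K$ off the equation, using this $\hat Z$ bound, the estimate on $\hat f^i$, and \eqref{var of Yj}. This is the paper's Step 1, and it yields the square-root rates for $\hat K$ and $\hat Z$ that you anticipated.
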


\begin{proof}
  \textbf{Step 1.} We first show that in both cases, there exists a constant $C$ independent of $N$, such that 
  \begin{align*}
      &\E\left[\int_0^T|\hat{Z}^i_t|^2dt\right]\leq C\left(\E\left[\sup_{t\in[0,T]}|\hat{Y}^i_t|^2\right]\right)^{\frac{1}{2}}, \\  
      &\E\left[\sup_{t\in[0,T]}|\hat{K}_t|^2\right]\leq C\left\{\left(\E\left[\sup_{t\in[0,T]}|\hat{Y}^i_t|^2\right]\right)^{\frac{1}{2}}+\frac{1}{N}\sum_{j=1}^N \E\left[\sup_{t\in[0,T]}|\hat{Y}^j_t|^2\right]+\frac{1}{N}\right\}.
  \end{align*}
    It is easy to check that 
    \begin{align*}
        \hat{Y}_t^i=&\hat{Y}^i_T+\int_t^T \hat{f}^i_sds
        -\int_t^T \sum_{j=1}^N \hat{Z}^{i,j}_s dB^j_s+\hat{K}_T-\hat{K}_t,
    \end{align*}
    where 
    \begin{align*}
        \hat{f}^i_s=f^i\left(s,Y^i_s,\frac{1}{N}\sum_{j=1}^N Y^j_s\right)-f^i(s,\bar{Y}^i_s,\E[\bar{Y}^i_s]).
    \end{align*}
    Applying It\^{o}'s formula to $|\hat{Y}^i_t|^2$ and then taking expectations, we obtain that 
    \begin{align*}
      \E\left[\int_0^T|\hat{Z}^i_t|^2dt\right]\leq&\E\left[|\hat{Y}^i_T|^2+2\int_0^T \hat{Y}^i_s \hat{f}^i_s ds+2\int_0^T\hat{Y}^i_s d\hat{K}_s\right]\\
      \leq & C\E\left[\sup_{t\in[0,T]}|\hat{Y}^i_s|\int_0^T \left(|f^i(s,0,0)|+\left|\frac{1}{N}\sum_{j=1}^N Y^j_s\right|+\E[|\bar{Y}^i_s|]\right)ds\right]\\
      &+C\E\left[|\hat{Y}^i_T|^2+\sup_{t\in[0,T]}|\hat{Y}^i_s|^2+\sup_{t\in[0,T]}|\hat{Y}^i_s|(|K^{(N)}_T|+|K_T|)\right].
    \end{align*}
    Noting that $\bar{Y}^i$ is an independent copy of $Y\in\mathcal{S}^2$ and recalling Eqs. \eqref{estimateYiZiKN} and \eqref{mean of Yi}, we obtain that 
    \begin{align*}
      \E\left[\int_0^T|\hat{Z}^i_t|^2dt\right]\leq C\left(\E\left[\sup_{t\in[0,T]}|\hat{Y}^i_t|^2\right]\right)^{\frac{1}{2}}.
  \end{align*}
  Due to the fact that 
  \begin{align*}
        \hat{K}_t=&\hat{Y}^i_0-\hat{Y}^i_t-\int_0^t \hat{f}^i_sds
        +\int_0^t \sum_{j=1}^N \hat{Z}^{i,j}_s dB^j_s,
    \end{align*}
    it follows from H\"{o}lder's inequality and Doob's inequality that 
    \begin{align*}
       \E\left[\sup_{t\in[0,T]}|\hat{K}_t|^2\right]\leq C\left\{\E\left[\sup_{t\in[0,T]}|\hat{Y}^i_t|^2\right]+\E\left[\int_0^T|\hat{Z}^i_t|^2dt\right]+\E\left[\int_0^T|\hat{f}^i_s|^2ds\right]\right\}. 
    \end{align*}
    Simple calculation yields that 
    \begin{equation}\begin{split}\label{hatfi}
        |\hat{f}^i_s|&=\left|f^i\left(s,Y^i_s,\frac{1}{N}\sum_{j=1}^N Y^j_s\right)-f^i\left(s,\bar{Y}^i_s,\frac{1}{N}\sum_{j=1}^N\E[\bar{Y}^j_s]\right)\right|\\
        &\leq L\left\{|\hat{Y}^i_s|+\frac{1}{N}\sum_{j=1}^N|\hat{Y}^j_s|+\frac{1}{N}\left|\sum_{j=1}^N(\bar{Y}^j_s-\E[\bar{Y}^j_s])\right|\right\}.
    \end{split}\end{equation}
    Then, we have
    \begin{align*}
       \E\left[\int_0^T|\hat{f}^i_s|^2ds\right]\leq C\left\{\E\left[\sup_{t\in[0,T]}|\hat{Y}^i_t|^2\right]+\frac{1}{N}\sum_{j=1}^N \E\left[\sup_{t\in[0,T]}|\hat{Y}^j_t|^2\right]+\E\left[\int_0^T\left|\frac{1}{N}\sum_{j=1}^N(\bar{Y}^j_s-\E[\bar{Y}^j_s])\right|^2ds\right]\right\}. 
    \end{align*}
      Recall that $\bar{Y}^i$, $1\leq i\leq N$, are independent copies of $Y\in \mathcal{S}^2$. It follows that there exists a constant $C$ independent of $N,T$, such that for any $t\in[0,T]$
    \begin{align}\label{var of Yj}
        \E\left[\left|\frac{1}{N}\sum_{j=1}^N(\bar{Y}^j_t-\E[\bar{Y}^j_t])\right|^2\right]=\frac{1}{N^2}\sum_{j=1}^N  \mathbb{V}[\bar{Y}^j_t]\leq \frac{C}{N},
    \end{align}
    where $\mathbb{V}[\xi]$ stands for the variance of $\xi$. All the above analysis indicates that 
    \begin{align*}
       \E\left[\sup_{t\in[0,T]}|\hat{K}_t|^2\right]\leq C\left\{\left(\E\left[\sup_{t\in[0,T]}|\hat{Y}^i_t|^2\right]\right)^{\frac{1}{2}}+\frac{1}{N}\sum_{j=1}^N \E\left[\sup_{t\in[0,T]}|\hat{Y}^j_t|^2\right]+\frac{1}{N}\right\}. 
    \end{align*}

  \textbf{Step 2.} We show that there exists a constant $C$ independent of $N$, such that 
  \begin{equation}\label{suphatYi}\begin{split}
       \E\left[\sup_{t\in[0,T]}|\hat{Y}^i_t|^2\right] \leq &C\E\left[\int_0^T\left|\frac{1}{N}\sum_{j=1}^N(\bar{Y}^j_u-\E[\bar{Y}^j_u])\right|^2du\right]
        +C\E\left[\sup_{s\in[0,T]}|L^{(N)}_s(\widetilde{U}_s)-L_s(\widetilde{U}^i_s)|^2\right].
    \end{split}\end{equation}
  Let $\bar{U}^i$ be given as in the proof of Proposition \ref{prop4.2}. For any $t\in[0,T]$ and $1\leq i\leq N$, set $\hat{U}^i_t=\bar{U}^i_t-\widetilde{U}^i_t$. Recalling Eqs. \eqref{rep Yi},  \eqref{baryi} and \eqref{hatfi}, for any $t\geq r$, we have 
    \begin{align*}
        |\hat{Y}^i_t|\leq&|\hat{U}^i_t|+\E\left[\sup_{s\in[t,T]}|L^{(N)}_s(\bar{U}_s)-L_s(\widetilde{U}^i_s)\Big|\mathcal{F}^{(N)}_t\right] \\
        \leq&L\E\left[\int_t^T|\hat{Y}^i_s|ds\Big|\mathcal{F}^{(N)}_t\right]+L\E\left[\int_t^T\frac{1}{N}\sum_{j=1}^N|\hat{Y}^j_s|ds\Big|\mathcal{F}^{(N)}_t\right]+\E\left[\sup_{s\in[r,T]}|L^{(N)}_s(\bar{U}_s)-L^{(N)}_s(\widetilde{U}_s)\Big|\mathcal{F}^{(N)}_t\right]\\
        &+L\E\left[\int_t^T\left|\frac{1}{N}\sum_{j=1}^N (\bar{Y}^j_s-\E[\bar{Y}^j_s])\right|ds\Big|\mathcal{F}^{(N)}_t\right]+\E\left[\sup_{s\in[0,T]}|L^{(N)}_s(\widetilde{U}_s)-L_s(\widetilde{U}^i_s)|\Big|\mathcal{F}^{(N)}_t\right].
    \end{align*}
    Applying Doob's inequality and H\"{o}lder's inequality yields that 
    \begin{equation}\label{e1}\begin{split}
        \E\left[\sup_{t\in[r,T]}|\hat{Y}^i_t|^2\right]\leq &20L^2 T \E\left[\int_r^T|\hat{Y}^i_s|^2ds\right]+20L^2 T\E\left[\int_r^T\left|\frac{1}{N}\sum_{j=1}^N (\bar{Y}^j_s-\E[\bar{Y}^j_s])\right|^2ds\right]\\
        &+20L^2 T\E\left[\int_t^T\frac{1}{N}\sum_{j=1}^N|\hat{Y}^j_s|^2ds\right]+20\E\left[\sup_{s\in[r,T]}|L^{(N)}_s(\bar{U}_s)-L^{(N)}_s(\widetilde{U}_s)^2\right]\\
        &+20\E\left[\sup_{s\in[0,T]}|L^{(N)}_s(\widetilde{U}_s)-L_s(\widetilde{U}^i_s)|^2\right]
    \end{split}\end{equation}
    Recalling Eqs. \eqref{LtXLtY} and \eqref{hatfi}, we have 
    \begin{align*}
         &|L^{(N)}_s(\bar{U}_s)-L^{(N)}_s(\widetilde{U}_s)|\le \frac{M}{m}\frac{1}{N}\sum_{i=1}^N|\hat{U}^i_s|\le \frac{M}{m}\frac{1}{N}\sum_{i=1}^N\E\left[\int_s^T|\hat{f}^i_r|dr\Big|\mathcal{F}^{(N)}_s\right]\\
         \leq &2\frac{M}{m}\frac{L}{N}\sum_{i=1}^N\E\left[\int_s^T|\hat{Y}^i_u|du\Big|\mathcal{F}^{(N)}_s\right]+L\frac{M}{m}\E\left[\int_s^T\frac{1}{N}\left|\sum_{j=1}^N(\bar{Y}^j_s-\E[\bar{Y}^j_s])\right|du\Big|\mathcal{F}^{(N)}_s\right].
    \end{align*}
    Applying Doob's inequality and H\"{o}lder's inequality, we obtain that 
    \begin{equation}\label{e3}\begin{split}
        \E\left[\sup_{s\in[r,T]}|L^{(N)}_s(\bar{U}_s)-L^{(N)}_s(\widetilde{U}_s)^2\right]\leq &32\frac{M^2L^2 T}{m^2}\E\left[\int_r^T\sum_{i=1}^N\frac{1}{N}|\hat{Y}^i_u|^2du\right]\\
        &+8\frac{M^2L^2T}{m^2}\E\left[\int_r^T\left|\frac{1}{N}\sum_{j=1}^N(\bar{Y}^j_u-\E[\bar{Y}^j_u])\right|^2du\right].
    \end{split}\end{equation}
    Plugging Eqs. \eqref{e3} into \eqref{e1} implies that 
    \begin{equation}\label{e4}\begin{split}
        \E\left[\sup_{t\in[r,T]}|\hat{Y}^i_t|^2\right]\leq &C \E\left[\int_r^T|\hat{Y}^i_s|^2ds\right]+C\E\left[\int_r^T\left|\frac{1}{N}\sum_{j=1}^N(\bar{Y}^j_u-\E[\bar{Y}^j_u])\right|^2du\right]\\
        &+C\E\left[\int_r^T\sum_{i=1}^N\frac{1}{N}|\hat{Y}^i_u|^2du\right]+C\E\left[\sup_{s\in[0,T]}|L^{(N)}_s(\widetilde{U}_s)-L_s(\widetilde{U}^i_s)|^2\right],
    \end{split}\end{equation}
    where $C$ is a constant depending on $L,M,m,T$. 
    Summing over $i$, we obtain 
    \begin{equation}\label{e4'}\begin{split}
        \E\left[\frac{1}{N}\sum_{i=1}^N\sup_{t\in[r,T]}|\hat{Y}^i_t|^2\right]\leq &C \int_r^T\E\left[\frac{1}{N}\sum_{i=1}^N\sup_{t\in[s,T]}|\hat{Y}^i_t|^2\right]ds
        +C\E\left[\int_0^T\left|\frac{1}{N}\sum_{j=1}^N(\bar{Y}^j_u-\E[\bar{Y}^j_u])\right|^2du\right]\\
        &+C\E\left[\sup_{s\in[0,T]}|L^{(N)}_s(\widetilde{U}_s)-L_s(\widetilde{U}^i_s)|^2\right].
    \end{split}\end{equation}
    It follows from the Gronwall inequality that 
    \begin{align*}
       \E\left[\frac{1}{N}\sum_{i=1}^N\sup_{t\in[0,T]}|\hat{Y}^i_t|^2\right] \leq &C\E\left[\int_0^T\left|\frac{1}{N}\sum_{j=1}^N(\bar{Y}^j_u-\E[\bar{Y}^j_u])\right|^2du\right]
        +C\E\left[\sup_{s\in[0,T]}|L^{(N)}_s(\widetilde{U}_s)-L_s(\widetilde{U}^i_s)|^2\right].
    \end{align*}
    Coming back to Eq. \eqref{e4} and applying Gronwall inequality again, we finally obtain Eq. \eqref{suphatYi}.

\textbf{Step 3.} It remains to prove the convergence rate for  $\E\left[\sup_{t\in[0,T]}|\hat{Y}^i_t|^2\right]$.  For any $t\in[0,T]$, denote $\mu_t$ and $\mu^{(N)}_t$ the common law and empirical law of the random variables $\{\widetilde{U}^i_t\}_{i=1}^N$, respectively, i.e., $\mu^{(N)}_t=\frac{1}{N}\sum_{i=1}^N\delta_{\widetilde{U}^i_t}$. For any $(t,x,\mu)\in [0,T]\times\mathbb{R}\times \mathcal{P}_1(\mathbb{R})$, we define
    \begin{align*}
        H(t,x,\mu):=\int h(t,x+y)\mu(dy).
    \end{align*}
    For each $(t,\mu)\in[0,T]\times\mathcal{P}_1(\mathbb{R})$, it is easy to check that $H(t,\cdot,\mu)$ is increasing and for any $x,y\in\mathbb{R}$,
    \begin{align}\label{Lip H}
        m|x-y|\leq |H(t,x,\mu)-H(t,y,\mu)|\leq M|x-y|.
    \end{align}
    Recalling Eqs. \eqref{tildeLN} and \eqref{tildeL}, we have 
    \begin{equation}\label{LNL}\begin{split}
        |L^{(N)}_t(\widetilde{U}_t)-L_t(\widetilde{U}^i_t)|&\leq |\tilde{L}^{(N)}_t(\widetilde{U}_t)-\tilde{L}_t(\widetilde{U}^i_t)|\\
        &\leq \frac{1}{m}|H(t,\tilde{L}^{(N)}_t(\widetilde{U}_t),\mu^{(N)}_t)-H(t,\tilde{L}_t(\widetilde{U}^i_t),\mu^{(N)}_t)|\\
        &= \frac{1}{m}|H(t,\tilde{L}_t(\widetilde{U}^i_t),\mu^{(N)}_t)-H(t,\tilde{L}_t(\widetilde{U}^i_t),\mu_t)|.
    \end{split}\end{equation}

    \textbf{Case 1.} Set $$\hat{H}_t=H(t,\tilde{L}_t(\widetilde{U}^i_t),\mu^{(N)}_t)-H(t,\tilde{L}_t(\widetilde{U}^i_t),\mu_t)=\frac{1}{N}\sum_{i=1}^N(h(t,V^i_t)-\E[h(t,V^i_t)]),$$
    where $V^i_t=\widetilde{U}^i_t+\tilde{L}_t(\widetilde{U}^i_t)$. By Lemma \ref{Lip}, let $\{\psi_t\}_{t\in[0,T]}$ be the derivative of $\{\tilde{L}_t(\widetilde{U}^i_t)\}_{t\in[0,T]}$, which is bounded. It follows from It\^{o}'s formula that 
    \begin{align*}
        \hat{H}_t=&\frac{1}{N}\sum_{i=1}^N(h(T,V^i_T)-\E[h(T,V^i_T)])-\int_t^T\frac{1}{N}\sum_{i=1}^N\left\{\frac{\partial h}{\partial t}(u,V^i_u)-\E\left[\frac{\partial h}{\partial t}(u,V^i_u)\right]\right\}du\\
        &-\int_t^T\frac{1}{N}\sum_{i=1}^N\left\{\frac{\partial h}{\partial x}(u,V^i_u)(\psi_u-\bar{f}^i_u)-\E\left[\frac{\partial h}{\partial x}(u,V^i_u)(\psi_u-\bar{f}^i_u)\right]\right\}du\\
        &-\frac{1}{2}\int_t^T\frac{1}{N}\sum_{i=1}^N\left\{\frac{\partial^2 h}{\partial x^2}(u,V^i_u)|\bar{Z}^i_u|^2-\E\left[\frac{\partial^2 h}{\partial x^2}(u,V^i_u)|\bar{Z}^i_u|^2\right]\right\}du-\frac{1}{N}\int_t^T\sum_{i=1}^N\frac{\partial h}{\partial x}(u,\bar{V}^i_u)\bar{Z}^i_u dB^i_u.
    \end{align*}
    Applying H\"{o}lder's inequality and Doob's inequality, there exists a constant $C$ independent of $N$, such that 
    \begin{align*}
        \E\left[\sup_{t\in[0,T]}|\hat{H}_t|^2\right]\leq &\frac{C}{N} \left\{\mathbb{V}[h(T,V^1_T)]+\int_0^T \mathbb{V}\left[\frac{\partial h}{\partial t}(u,V^1_u)\right]du+\int_0^T \mathbb{V}\left[\frac{\partial h}{\partial x}(u,V^1_u)(\psi_u-\bar{f}^1_u)\right]du\right\}\\
        &+\frac{C}{N} \left\{\int_0^T \E\left[\left|\frac{\partial h}{\partial x}(u,V^1_u)\bar{Z}^1_u\right|^2\right]du+\int_0^T \mathbb{V}\left[\frac{\partial^2 h}{\partial x^2}(u,V^1_u)|\bar{Z}^1_u|^2\right]du\right\}\\
        \leq & \frac{C}{N}\left\{1+\E[|V^1_T|]^2+\E\left[\int_0^T|\bar{f}^1_u|^2du\right]+\E\left[\int_0^T|\bar{Z}^1_u|^2 du\right]+\E\left[\int_0^T|\bar{Z}^1_u|^4 du\right]\right\}\\
        \leq & \frac{C}{N}\Bigg\{1+\E[|\xi|]^2+\E\left[\int_0^T|f(u,0,0)|^2du\right]+\E\left[\sup_{t\in[0,T]}|\bar{Y}^1_t|^2\right]\\
        &+\E\left[\int_0^T|\bar{Z}^1_u|^2 du\right]+\E\left[\int_0^T|\bar{Z}^1_u|^4 du\right]\Bigg\}\leq \frac{C}{N},
        \end{align*}
        which together with Eqs. \eqref{var of Yj}, \eqref{suphatYi} and \eqref{LNL} implies that desired result.

        \textbf{Case 2.} By Eqs. \eqref{Lip H} and \eqref{LNL}, we have 
        \begin{align}\label{supLNL}
            \E\left[\sup_{s\in[0,T]}|L^{(N)}_s(\widetilde{U}_s)-L_s(\widetilde{U}^i_s)|^2\right]\leq \frac{M^2}{m^2}\E\left[\sup_{t\in[0,T]} W^2_1(\mu^{(N)}_t,\mu_t)\right]. 
        \end{align}
        Since for any $0\le s\leq t\leq T$, we have
        \begin{align*}
            \widetilde{U}^i_s-\widetilde{U}^i_t=\int_s^t f^i(u,\bar{Y}^i_u,\E[\bar{Y}^i_u])du-\int_s^t \bar{Z}^i_udB^i_u.
            \end{align*}
            It is easy to check that, for any $2\leq q\le p$, 
     \begin{align*}
    \E[|\widetilde{U}^i_s-\widetilde{U}^i_t|^q]\leq &C\left\{\E\left[\int_s^t |f^i(u,\bar{Y}^i_u,\E[\bar{Y}^i_u])|^q du\right]|t-s|^{\frac{q}{2}}+\left(\E\left[\int_s^t |\bar{Z}^i_u|^2 du\right]\right)^{\frac{q}{2}}\right\}\\
    \leq &C\left\{\left(\E\left[\int_s^t (|f^i(u,0,0)|^q +|\bar{Y}^i_u|^q)du\right]\right)|t-s|^{\frac{q}{2}}+\sup_{t\in[0,T]} \E[|\bar{Z}^i_u|^q] |t-s|^{\frac{q}{2}}\right\}\\
    \leq & C|t-s|^{\frac{q}{2}},
    \end{align*}
    where we have used Theorem 1 in \cite{HMW}. Then, following the proof of Theorem 3.3 in \cite{BCGL}, there exists a constant $C$ independent of $N$, such that  
    \begin{align*}
        \E\left[\sup_{t\in[0,T]} W^2_1(\mu^{(N)}_t,\mu_t)\right]\leq \frac{C}{n^{\frac{1}{2}}},
    \end{align*}
    which together with Eqs. \eqref{var of Yj}, \eqref{suphatYi} and \eqref{supLNL} implies that desired result.
\end{proof}

\begin{remark}
  (1)  Consider the following mean-field BSDE with mean reflection:
\begin{displaymath}
\begin{cases}
\bar{Y}^i_t=\xi^i+\int_t^T f^i(s,\bar{Y}^i_s,\P_{\bar{Y}^i_s})ds-\int_t^T \bar{Z}^i_s dB^i_s+K_T-K_t, \\
\E[h(t,\bar{Y}^i_t)]\geq 0, t\in[0,T] \textrm{ and} \int_0^T \E[h(t,\bar{Y}^i_t)]dK_t=0.
\end{cases}
\end{displaymath}
We may use the solution to the particle system in Remark \ref{rem distribution} to approximate $(\bar{Y}^i,\bar{Z}^i,K)$. Moreover, similar convergence result as in Theorem \ref{thm4.3} still holds. 

(2) The propagation of chaos for mean-field reflected BSDE \eqref{nonlinearyz'} has been established in \cite{DDZ} (see Proposition 4.3 in \cite{DDZ}). Compared with their result, we do not need any additional requirement for the Lipschitz constant of the loss function $h$. Moreover, we may provide the explicit rate of convergence.
\end{remark}

\section{The case of linear reflection and the propagation of chaos}

In this section, we study the case of linear reflection, which means that the loss function $h$ satisfies the following assumption.
\begin{itemize}
\item[(H3')] $h(t,x)=ax+b$ for some $a>0$ and $b\in\mathbb{R}$.
\end{itemize}

\begin{remark}\label{rH2}
    Under condition (H3'), the following set 
    \begin{align*}
        \{y\in\mathbb{R}^N:\sum_{i=1}^N h(y^i)\geq 0\}
    \end{align*}
    is convex  with normal direction proportional to $(a,\cdots,a)$.
\end{remark}

When investigating the propagation of chaos for mean-field mean reflected BSDE, if the loss function $h$ is linear, we are able to deal with the case that $f$ depends on $Z$ and its distribution. Throughout this section, we assume that (H1), (H2) and (H3') hold.  We consider the following multi-dimensional reflected BSDE:
\begin{equation}\label{eq7'}
\begin{cases}
Y^i_t=\theta^i+\int_t^T f^i\left(s,Y^i_s,\frac{1}{N}\sum_{i=1}^N Y^i_s,Z^{i,i}_s,\frac{1}{N}\sum_{i=1}^N Z^{i,i}_s\right) ds\\
 \ \ \ \ \ \ \ \ \ -\int_t^T\sum_{j=1}^N Z^{i,j}_sd B^j_s+K^{(N)}_T-K^{(N)}_t, \ \forall 1\leq i\leq N, \\
\frac{1}{N}\sum_{i=1}^N h(Y^i_t)\geq 0, \ t\in[0,T] \textrm{ and }
\int_0^T\frac{1}{N}\sum_{i=1}^N h(Y^i_t)dK^{(N)}_t=0.
\end{cases}
\end{equation}

\begin{theorem}\label{thm5.1'}
 The reflected BSDE \eqref{eq7'} has a unique solution $(\{Y^i,Z^i\}_{1\leq i\leq N},K^{(N)})$ with $K^{(N)}\in \mathcal{A}^2(\mathbb{F}^{(N)})$, $Y^i\in \mathcal{S}^2(\mathbb{F}^{(N)})$ and $Z^i\in \mathcal{H}^2(\mathbb{F}^{(N)};\mathbb{R}^N)$ for $i=1,\cdots,N$. Moreover, there exists a constant $C$ independent of $N$, such that
\begin{align*}
    \E\left[|K^{(N)}_T|^2\right]\leq C.
\end{align*}
\end{theorem}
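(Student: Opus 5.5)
Because the loss function is linear, I would treat \eqref{eq7'} as an $N$-dimensional reflected BSDE in the closed convex domain $D=\{y\in\mathbb{R}^N:\sum_{i=1}^N (ay^i+b)\geq 0\}$. By Remark \ref{rH2}, this domain has a constant inward normal proportional to $(1,\cdots,1)$. The reflection term $K^{(N)}(1,\cdots,1)$ is exactly the normal reflection $\int \nu_s\, d|K|_s$ with $\nu=N^{-1/2}(1,\cdots,1)$, after rescaling $K$ by $\sqrt{N}$. The Skorokhod condition $\int_0^T \frac1N\sum_i h(Y^i_t)\,dK^{(N)}_t=0$ says that $K^{(N)}$ increases only when $Y_t\in\partial D$.

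The generator $F^i(s,y,z):=f^i(s,y^i,\frac1N\sum_j y^j,z^{i,i},\frac1N\sum_j z^{j,j})$ is Lipschitz in $(y,z)\in\mathbb{R}^N\times\mathbb{R}^{N\times N}$ by (H2). The terminal value $\theta=(\xi^i+L^{(N)}_T(\xi^{(N)}))_i$ is square integrable and lies in $D$ by the definition of $L^{(N)}_T$. Existence and uniqueness in $\mathcal{S}^2\times\mathcal{H}^2\times\mathcal{A}^2$ then follow from the result of \cite{GP} on multidimensional reflected BSDEs in convex domains, as already announced in the introduction. If one prefers a self-contained argument, uniqueness comes from It\^o's formula applied to $|Y-\tilde Y|^2$. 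The cross term $\sum_i(Y^i-\tilde Y^i)\,d(K^{(N)}-\tilde K^{(N)})$ is nonpositive: it equals $\frac{1}{a}\sum_i(h(Y^i)-h(\tilde Y^i))\,d(K^{(N)}-\tilde K^{(N)})$, and the Skorokhod conditions together with $\sum_i h\geq 0$ make it $\leq 0$.

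The main task is the bound $\E[|K^{(N)}_T|^2]\leq C$ uniformly in $N$. I would average the $N$ equations. Setting $\bar{Y}^N_t=\frac1N\sum_iY^i_t$, we get
\begin{align*}
\bar{Y}^N_t=\frac1N\sum_i\theta^i+\int_t^T \frac1N\sum_i f^i_s\,ds-\int_t^T\sum_j\Big(\frac1N\sum_i Z^{i,j}_s\Big)dB^j_s+K^{(N)}_T-K^{(N)}_t .
\end{align*}
The constraint reads $a\bar Y^N_t+b\geq 0$, with $K^{(N)}$ acting only when equality holds. So $(\bar Y^N,K^{(N)})$ is a scalar reflected BSDE with a constant lower obstacle $-b/a$, and $K^{(N)}_T-K^{(N)}_t$ can be read off a Snell-envelope/Skorokhod formula. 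Specifically,
\begin{align*}
K^{(N)}_T-K^{(N)}_t\le \sup_{s\in[t,T]}\Big(-\tfrac ba-\E\big[\tfrac1N\textstyle\sum_i\theta^i+\int_s^T\frac1N\sum_i f^i_r dr\,\big|\,\mathcal{F}^{(N)}_s\big]\Big)^+ .
\end{align*}
(Equality holds up to a conditional-expectation adjustment in the Snell-envelope form.) Doob's inequality then gives
\begin{align*}
\E[|K^{(N)}_T|^2]\le C\Big(1+\E[|\xi|^2]+\E\Big[\int_0^T\frac1N\sum_i |f^i_s|^2 ds\Big]\Big).
\end{align*}

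The obstacle is that $f^i_s$ contains $Y^i,Z^{i,i}$, so the averaged $Y$ and $Z$ norms must be controlled independently of $N$. I would apply It\^o's formula to $|Y^i_t|^2$, sum over $i$ and divide by $N$. The reflection term $\frac{2}{N}\sum_i Y^i_s\,dK^{(N)}_s=2\bar Y^N_s\,dK^{(N)}_s=-\frac{2b}{a}\,dK^{(N)}_s$ is harmless, since it is bounded by $\frac{2|b|}{a}K^{(N)}_T\le \varepsilon|K^{(N)}_T|^2+C_\varepsilon$. The generator terms are handled by the Lipschitz bound, Young's inequality, and $|\frac1N\sum_j z^{j,j}|^2\le\frac1N\sum_j|Z^j|^2$. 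This yields
\begin{align*}
\E\Big[\sup_t\frac1N\sum_i|Y^i_t|^2+\int_0^T\frac1N\sum_i|Z^i_s|^2ds\Big]\le C(1+\E[|\xi|^2]+\E[|L^{(N)}_T(\xi^{(N)})|^2])+\varepsilon\E[|K^{(N)}_T|^2],
\end{align*}
after a standard Gronwall/BDG step. By \eqref{LtSt}, $\E|L^{(N)}_T(\xi^{(N)})|^2\le C(1+\E|\xi|^2)$. Combining this with the $K$ bound and choosing $\varepsilon$ small closes the estimate with $C$ independent of $N$. The delicate point is the sup inside the $Y$ estimate: I would first take plain expectations to bound $\int\frac1N\sum|Z^i|^2$ and $\sup_t \E[\cdot]$, and only afterwards upgrade via BDG.
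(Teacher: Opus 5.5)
Your argument is correct, and its core is the same as the paper's. You average the $N$ equations so that the Skorokhod condition controls the reflection term, obtain $N$-independent bounds on $\frac1N\sum_i\E[\,|Y^i_t|^2+\int_t^T|Z^i_s|^2ds\,]$, and deduce the bound on $K^{(N)}$; existence and uniqueness come from \cite{GP} in both.

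The execution differs in how the reflection term is handled:
\begin{itemize}
\item The paper applies It\^o's formula to $e^{\alpha t}(Y^i_t+\frac{b}{a})^2$. After averaging, the reflection term is $\frac{2}{a}\int e^{\alpha s}\frac1N\sum_i h(Y^i_s)\,dK^{(N)}_s=0$ exactly. The $(Y,Z)$ estimate then involves no $K^{(N)}$ at all. $\E[|K^{(N)}_T|^2]$ is read off the $i$-th equation $K^{(N)}_T=Y^i_0-Y^i_T-\int_0^Tf^i_s\,ds+\int_0^T\sum_jZ^{i,j}_s\,dB^j_s$ and averaged over $i$.
\item Without the shift, you are left with $-\frac{2b}{a}\,dK^{(N)}$. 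You must then close a loop between the $(Y,Z)$ estimate and a separate $K$ estimate. This does close: all constants are independent of $N$, and $\E[|K^{(N)}_T|^2]<\infty$ a priori since $K^{(N)}\in\mathcal{A}^2(\mathbb{F}^{(N)})$.
\end{itemize}
The shift by $b/a$ makes the loop unnecessary, and with it the scalar reflected BSDE for $\bar Y^N$ is not needed either.

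One step should be corrected. Write $V_s=\E[\frac1N\sum_i\theta^i+\int_s^T\frac1N\sum_if^i_r\,dr\,|\,\mathcal{F}^{(N)}_s]$. Your pathwise inequality $K^{(N)}_T-K^{(N)}_t\le\sup_{s\in[t,T]}(-\frac{b}{a}-V_s)^+$ is false in general.
\begin{itemize}
\item Here $K^{(N)}$ is the increasing part of the Snell envelope of the random process $(-\frac{b}{a}-V)^+$. Such a compensator can exceed the running supremum of the obstacle over $[t,T]$ along a path.
\item The identity $K_T-K_t=\sup_{s\ge t}L_s$ is special to the mean-reflected case, where $K$ is deterministic.
\end{itemize}
What you need, and what is true, is the standard $L^2$ estimate $\E[|K^{(N)}_T|^2]\le C\,\E[\sup_{t}|(-\frac{b}{a}-V_t)^+|^2]$ for the compensator of a Snell envelope. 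Alternatively, use the identity for $K^{(N)}_T$ obtained from the averaged equation, as above. Either yields your bound on $\E[|K^{(N)}_T|^2]$, and the rest of your argument goes through.
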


\begin{proof}
    By Remark \ref{rH2} and Theorem 5.9 in \cite{GP}, the reflected BSDE \eqref{eq7'} admits a unique solution. It remains to prove the last assertion. For this purpose, set $c=\frac{b}{a}$. Applying It\^{o}'s formula to $e^{\alpha t }(Y^i_t+c)^2$, where $\alpha$ is a positive constant to be determined later, we obtain that 
    \begin{align*}
        &e^{\alpha t}(Y^i_t+c)^2+\alpha\int_t^T e^{\alpha s }(Y^i_s+c)^2 ds+\int_t^T e^{\alpha s}|Z^i_s|^2ds\\
        =&e^{\alpha T}(Y^i_T+c)^2+2\int_t^T e^{\alpha s}(Y^i_s+c)f^i\left(s,Y^i_s,\frac{1}{N}\sum_{i=1}^N Y^i_s,Z^{i,i}_s,\frac{1}{N}\sum_{i=1}^N Z^{i,i}_s\right) ds\\
        &-2\int_t^T\sum_{j=1}^N e^{\alpha s}(Y^i_s+c)Z^{i,j}_sd B^j_s+2\int_t^T e^{\alpha s}(Y^i_s+c)dK^{(N)}_s.
    \end{align*}
    By the Lipschitz assumption for $f$, it is easy to check that 
    \begin{align*}
        &2(Y^i_s+c)f^i\left(s,Y^i_s,\frac{1}{N}\sum_{i=1}^N Y^i_s,Z^{i,i}_s,\frac{1}{N}\sum_{i=1}^N Z^{i,i}_s\right)\\
        \leq &2|Y^i_s+c|\left(|f^i(s,-c,-c,0,0)|+L|Y^i_s+c|+L\left|\frac{1}{N}\sum_{i=1}^N (Y^i_s+c)\right|+L|Z^{i,i}_s|+L\left|\frac{1}{N}\sum_{i=1}^N Z^{i,i}_s\right|\right)\\
        \leq &|f^i(s,-c,-c,0,0)|^2+c_1\frac{1}{N}\sum_{i=1}^N(Y^i_s+c)^2+c_2|Z^{i,i}_s|^2+c_3\frac{1}{N}\sum_{i=1}^N(Z^{i,i}_s)^2+\tilde{L}(Y^i_s+c)^2.
    \end{align*}
    where $c_1,c_2,c_3$ are positive constants to be determined later and
    \begin{align*}
        \tilde{L}=\left(1+2L+L^2\left(\frac{1}{c_1}+\frac{1}{c_2}+\frac{1}{c_3}\right)\right).
    \end{align*} 
  All the above analysis implies that 
    \begin{align*}
        &\E\left[e^{\alpha t}(Y^i_t+c)^2+(\alpha-\tilde{L})\int_t^T e^{\alpha s }(Y^i_s+c)^2 ds+(1-c_2)\int_t^T e^{\alpha s}|Z^i_s|^2ds\right]\\
        \leq & \E\left[e^{\alpha T}(Y^i_T+c)^2+\int_0^T e^{\alpha s}|f^i(s,-c,-c,0,0)|^2ds+c_1\int_t^T e^{\alpha s}\frac{1}{N}\sum_{i=1}^N(Y^i_s+c)^2ds \right]\\
        &+\E\left[c_3\int_t^T e^{\alpha s}\frac{1}{N}\sum_{i=1}^N(Z^{i,i}_s)^2ds+\frac{2}{a}\int_t^T e^{\alpha s}h(Y^i_s)dK^{(N)}_s\right].
    \end{align*}
    Summing over $i$ and noting that $f^i$, $1\leq i\leq N$ are IID copies of $f$, we obtain that 
    \begin{align*}
        &\frac{1}{N}\sum_{i=1}^N\E\left[e^{\alpha t}(Y^i_t+c)^2+(\alpha-\tilde{L})\int_t^T e^{\alpha s }(Y^i_s+c)^2 ds+(1-c_2)\int_t^T e^{\alpha s}|Z^i_s|^2ds\right]\\
        \leq & \E\left[\frac{1}{N}\sum_{i=1}^Ne^{\alpha T}(Y^i_T+c)^2+\int_0^T e^{\alpha s}|f(s,-c,-c,0,0)|^2ds+c_1\int_t^T e^{\alpha s}\frac{1}{N}\sum_{i=1}^N(Y^i_s+c)^2ds \right]\\
        &+\E\left[c_3\int_t^T e^{\alpha s}\frac{1}{N}\sum_{i=1}^N(Z^{i,i}_s)^2ds+\frac{2}{a}\int_t^T e^{\alpha s}\frac{1}{N}\sum_{i=1}^Nh(Y^i_s)dK^{(N)}_s\right].
    \end{align*}
    Choosing $c_1=1,c_2=c_3=\frac{1}{4}, \alpha=\tilde{L}+c_1+1$, using the Skorokhod condition and noting that 
    \begin{align*}
       Y^i_T=\xi^i+L^{(N)}_T(\xi)=\xi^i+\left(\frac{1}{N}\sum_{j=1}^N \xi^j+\frac{b}{a}\right)^-,
   \end{align*}
   where $\xi^i$, $1\leq i\leq N$, are IID copies of $\xi$,  there exists a constant $C$ independent of $N$, such that 
   \begin{equation}\begin{split}\label{yit+c}
       &\sup_{t\in[0,T]}\frac{1}{N}\sum_{i=1}^N\E\left[(Y^i_t+c)^2+\int_t^T (Y^i_s+c)^2 ds+\int_t^T |Z^i_s|^2ds\right]\\
        \leq & C\E\left[\frac{1}{N}\sum_{i=1}^N(Y^i_T+c)^2+\int_0^T |f(s,-c,-c,0,0)|^2ds\right]\\
        \leq & C\left(1+\E\left[|\xi|^2+\int_0^T |f(s,0,0,0,0)|^2ds\right]\right).
   \end{split}\end{equation}

   Finally, for each $1\leq i\leq N$, we have
   \begin{align*}
       K^{(N)}_t=Y^i_0-Y^i_T-\int_0^T f^i\left(s,Y^i_s,\frac{1}{N}\sum_{i=1}^N Y^i_s,Z^{i,i}_s,\frac{1}{N}\sum_{i=1}^N Z^{i,i}_s\right) ds+\int_0^T\sum_{j=1}^N Z^{i,j}_sd B^j_s.
   \end{align*}
   Simple calculation yields that 
   \begin{align*}
       \E\left[|K^{(N)}_T|^2\right]\leq& C\E\left[\int_0^T|f^i(s,0,0,0,0)|^2ds+\int_0^T|Z^i_s|^2ds+\int_0^T|Y^i_s|^2ds\right]\\
       &+\E\left[|Y^i_0|^2+|Y^i_T|^2\right]+\frac{1}{N}\E\left[\int_0^T\sum_{i=1}^N|Y^i_s|^2ds\right]+\frac{1}{N}\E\left[\int_0^T\sum_{i=1}^N|Z^i_s|^2ds\right].
   \end{align*}
   Taking arithmetic mean over $i$ and using \eqref{yit+c}, we obtain the desired result.
\end{proof}

Now, let $(\bar{Y}^i,\widehat{Z}^i,K)$ be the solution to the following mean-field BSDE with mean reflection:
\begin{displaymath}
\begin{cases}
\bar{Y}^i_t=\xi^i+\int_t^T f^i(s,\bar{Y}^i_s,\E[\bar{Y}^i_s],\widehat{Z}^i_s,\E[\widehat{Z}^i_s])ds-\int_t^T \widehat{Z}^i_s dB^i_s+K_T-K_t, \\
\E[h(t,\bar{Y}^i_t)]\geq 0, t\in[0,T] \textrm{ and} \int_0^T \E[h(t,\bar{Y}^i_t)]dK_t=0.
\end{cases}
\end{displaymath}
Then, $(\bar{Y}^i,\widehat{Z}^i,K)$, $1\leq i\leq N$ are independent copies of  $(Y,Z,K)$, the solution to \eqref{nonlinearyz}. For each $1\leq j\leq N$, set $\bar{Z}^{i,j}=\widehat{Z}^i I_{\{j=i\}}$. Therefore, the dynamics of the above mean-field BSDE with mean reflection can be rewritten as 
\begin{align*}
    \bar{Y}^i_t=\xi^i+\int_t^T f^i(s,\bar{Y}^i_s,\E[\bar{Y}^i_s],\bar{Z}^{i,i}_s,\E[\bar{Z}^{i,i}_s])ds-\int_t^T \sum_{j=1}^N\bar{Z}^{i,j}_s dB^j_s+K_T-K_t.
\end{align*}

\begin{remark}\label{rem representation}
    By the proof of Theorem 3.1 in \cite{BH} or the proof of Theorem \ref{thm4.1}, we have $Y^i_t=U^i_t+S_t$, where 
    \begin{align*}
        U^i_t=\E\left[\xi^i+\int_t^T f^i\left(s,Y^i_s,\frac{1}{N}\sum_{i=1}^N Y^i_s,Z^{i,i}_s,\frac{1}{N}\sum_{i=1}^N Z^{i,i}_s\right) ds\Big|\mathcal{F}^{(N)}_t\right]
    \end{align*}
    and $S$ is the Snell envelope of 
    \begin{align*}
        L^{(N)}_t(U_t):=\inf\left\{x\geq 0:\frac{1}{N}\sum_{i=1}^N h(t,U^i_t+x)\geq 0\right\}=\left(\frac{1}{N}\sum_{i=1}^N U^i_t+\frac{b}{a}\right)^-.
    \end{align*}
   
   Set 
   \begin{align*}
        \bar{U}^i_t&:=\E\left[\xi^i+\int_t^T f^i(s,\bar{Y}^i_s,\E[\bar{Y}^i_s],\bar{Z}^{i,i}_s,\E[\bar{Z}^{i,i}_s])ds\Big|\mathcal{F}^i_t\right]\\
        &=\E\left[\xi^i+\int_t^T f^i(s,\bar{Y}^i_s,\E[\bar{Y}^i_s],\bar{Z}^{i,i}_s,\E[\bar{Z}^{i,i}_s])ds\Big|\mathcal{F}^{(N)}_t\right]
    \end{align*}
    and 
   \begin{align*}
       L_t(\bar{U}^i_t):=\inf\{x\geq 0: \E[h(t,\bar{U}^i_t)]\geq 0\}=\left(\E[\bar{U}^i_t]+\frac{b}{a}\right)^-.
   \end{align*}
   By the proof of Proposition 7 in \cite{BEH}, we have $\bar{Y}^i_t=\bar{U}^i_t+R_t$, where    
    \begin{align*}
        R_t=\sup_{s\in[t,T]} L_s(\bar{U}^i_s)=\esssup_{\tau\in\mathcal{T}^N_{t,T}}\E\left[L_\tau(\bar{U}^i_\tau)|\mathcal{F}^{(N)}_t\right].
    \end{align*}
\end{remark}

Now, we state the main result in this section.
\begin{theorem}
   For any $1\leq i\leq N$, set 
\begin{align*}
\hat{Y}^i:=Y^i-\bar{Y}^i, \ \hat{K}:=K^{(N)}-K,\ \hat{Z}^i:=Z^i-\bar{Z}^i.
\end{align*}
Then, there exists a constant $C$ independent of $N$, such that
\begin{align*}
\E\left[\sup_{t\in[0,T]}|\hat{Y}^i_t|^2\right]\leq CN^{-1/2}, \ \E\left[\sup_{t\in[0,T]}|\hat{K}_t|^2\right]\leq CN^{-1/2}, \ \E\left[\int_0^T|\hat{Z}^i_t|^2dt\right]\leq CN^{-1/2}.
\end{align*} 
\end{theorem}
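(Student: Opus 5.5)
In the linear case the constraint reduces to $\frac1N\sum_i Y^i_t\ge -c$ with $c=b/a$, and $K^{(N)}$ is common to all particles. The plan is therefore to work with the It\^o identity for $\hat Y^i$ and exploit linearity to handle the reflection term. The $Z$-dependence rules out the Snell-envelope comparison of Section~3, because $\hat U^i$ would involve $\hat Z$ through conditional expectations and no sup-norm control of $\hat Z$ is available. Instead I will use an energy estimate, and treat the cross term $\int \hat Y^i\,d\hat K$ after averaging over $i$.

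\textbf{Step 1 (energy estimate).} Apply It\^o's formula to $e^{\alpha t}|\hat Y^i_t|^2$. The driver difference is
\[
\hat f^i_s=f^i\Big(s,Y^i_s,\tfrac1N\sum_j Y^j_s,Z^{i,i}_s,\tfrac1N\sum_j Z^{j,j}_s\Big)-f^i\big(s,\bar Y^i_s,\E[\bar Y^i_s],\bar Z^{i,i}_s,\E[\bar Z^{i,i}_s]\big).
\]
It is bounded by
\[
L\Big(|\hat Y^i_s|+\tfrac1N\sum_j|\hat Y^j_s|+|\hat Z^{i,i}_s|+\tfrac1N\sum_j|\hat Z^{j,j}_s|+|\Delta^Y_s|+|\Delta^Z_s|\Big),
\]
where $\Delta^Y_s=\frac1N\sum_j(\bar Y^j_s-\E\bar Y^j_s)$ and $\Delta^Z_s=\frac1N\sum_j(\widehat Z^j_s-\E\widehat Z^j_s)$. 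These are averages of i.i.d.\ centered variables, so $\E\int_0^T(|\Delta^Y|^2+|\Delta^Z|^2)\le C/N$, as in \eqref{var of Yj}. The terminal difference is $\hat Y^i_T=(\frac1N\sum_j\xi^j+c)^-$. Since $\E\xi+c\ge0$ by (H1), this gives $\E|\hat Y^i_T|^2\le\E|\frac1N\sum_j(\xi^j-\E\xi)|^2\le C/N$.

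\textbf{Step 2 (reflection term).} Average over $i$ and set $\bar{\hat Y}_s=\frac1N\sum_i\hat Y^i_s$. The reflection term becomes
\[
\int e^{\alpha s}\,\bar{\hat Y}_s\,d(K^{(N)}-K)_s .
\]
Write $\bar{\hat Y}=(\frac1N\sum_iY^i+c)-(\frac1N\sum_i\bar Y^i+c)$. By the Skorokhod condition for the particle system, $(\frac1N\sum_iY^i+c)\,dK^{(N)}=0$, and $\frac1N\sum_iY^i+c\ge0$. By the mean-field Skorokhod condition, $(\E\bar Y+c)\,dK=0$, and $\E\bar Y+c\ge0$. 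The term therefore equals
\[
-\int(\tfrac1N\textstyle\sum\bar Y^i+c)\,dK^{(N)}-\int(\tfrac1N\textstyle\sum Y^i+c)\,dK+\int\Delta^Y\,dK .
\]
The first two pieces are $\le \int|\Delta^Y|\,dK^{(N)}$ and $\le 0$ respectively, using $\frac1N\sum\bar Y^i+c\ge\Delta^Y$. The whole term is therefore bounded by
\[
\E\Big[\sup_s|\Delta^Y_s|\,(K^{(N)}_T+K_T)\Big]\le C\Big(\E\sup_s|\Delta^Y_s|^2\Big)^{1/2}.
\]
Here I use Theorem~\ref{thm5.1'} for $\E|K^{(N)}_T|^2\le C$, and the fact that $K$ is deterministic. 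To bound $\E\sup_s|\Delta^Y_s|^2$, note that $\bar Y^j-\E\bar Y^j=(\bar U^j-\E\bar U^j)$ because $R$ is deterministic, and this is a martingale plus a drift. Doob's inequality on i.i.d.\ sums then gives $\E\sup|\Delta^Y|^2\le C/N$, so the reflection contribution is $O(N^{-1/2})$. This is the source of the rate $N^{-1/2}$ and, I expect, the main obstacle: it is the only place where the sign structure of the linear constraint is used, and it must be done at the averaged level.

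\textbf{Step 3 (close the estimate).} With $\alpha$ large and Young's inequality, the averaged $Z$ terms are absorbed, using $\frac1N\sum_j|\hat Z^{j,j}|^2\le\frac1N\sum_j|\hat Z^j|^2$. This yields
\[
\sup_t\tfrac1N\textstyle\sum_i\E|\hat Y^i_t|^2+\tfrac1N\textstyle\sum_i\E\int_0^T|\hat Z^i|^2\le CN^{-1/2}.
\]
Next return to the single-$i$ identity, where the reflection term is $2\int\hat Y^i\,d\hat K$. Exchangeability makes $\E|\hat Y^i|^2$ independent of $i$, so the average bound applies to each $i$. For the $\sup$ bound, use BDG on the martingale term. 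For $\int\hat Y^i\,d\hat K$, exchangeability gives $\E\int\hat Y^i\,d\hat K=\E\int\bar{\hat Y}\,d\hat K\le CN^{-1/2}$, which handles the expectation version. For the $\sup$ version, bound $\hat K$ directly: the equation gives $\hat K_t=\hat Y^i_0-\hat Y^i_t-\int_0^t\hat f^i+\int_0^t\hat Z^i\,dB$, and averaging over $i$ gives
\[
\hat K_t=\bar{\hat Y}_0-\bar{\hat Y}_t-\tfrac1N\textstyle\sum_i\int_0^t\hat f^i+\tfrac1N\sum_i\int_0^t\hat Z^i\,dB .
\]
I would derive the $\sup$ estimate for $\bar{\hat Y}$ by BDG on this averaged equation, feeding in the bounds already obtained. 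This yields $\E\sup_t|\hat K_t|^2\le CN^{-1/2}$. Finally, since $\hat Y^i_t=\E[\hat Y^i_T+\int_t^T\hat f^i+\hat K_T-\hat K_t\mid\mathcal F^{(N)}_t]$, Doob's inequality gives $\E\sup_t|\hat Y^i_t|^2\le CN^{-1/2}$, and the $Z$-bound then follows by It\^o's formula as in Step~1 of Theorem~\ref{thm4.3}.
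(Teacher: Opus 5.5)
Your Steps 1--2 and the averaged estimate in Step 3 are the paper's Step 1. Both use It\^o's formula for $e^{\alpha t}|\hat Y^i_t|^2$ and average over $i$. Both use the two Skorokhod conditions together with $\E[\bar Y^i_s]+c\ge 0$ to reduce the reflection term to $\E[\sup_s|\Delta^Y_s|(K^{(N)}_T+K_T)]$, and both then invoke exchangeability. Your terminal bound via $\E\xi+c\ge 0$ is more explicit than the paper's.

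You depart from the paper in the $\sup$ estimate. The paper uses the representations $Y^i=U^i+S$ and $\bar Y^i=\bar U^i+R$ of Remark~\ref{rem representation}. Inside conditional expectations, $|\hat Y^i_t|$ is then dominated by $|\hat U^i_t|$, $\frac1N\sum_j\sup_s|\hat U^j_s|$ and $\sup_s|\Delta^Y_s|$. Since $\hat U^i_t=\E[\int_t^T\hat f^i_s\,ds\,|\,\mathcal F^{(N)}_t]$, Doob gives $\E[\sup_t|\hat U^i_t|^2]\le C\,\E[\int_0^T|\hat f^i_s|^2ds]\le CN^{-1/2}$. The paper then reads off $\hat K$ from a single particle's equation.

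So your stated reason for discarding the Snell-envelope comparison is mistaken. It needs only the $L^2(dt\otimes d\P)$ bound on $\hat Z$ from the energy step, not sup-norm control. Your route goes: a $\sup$ bound for the average $\bar{\hat Y}$, then $\hat K$ from the averaged equation, then $\hat Y^i$ from $\hat Y^i_t=\E[\hat Y^i_T+\int_t^T\hat f^i_sds+\hat K_T-\hat K_t\,|\,\mathcal F^{(N)}_t]$ and Doob. It avoids the explicit formulas for $K^{(N)}$ and $K$ and uses only the Skorokhod conditions and the sign of the constraint, so it is more robust. The price is one extra estimate.

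That extra estimate has to be set up differently than you wrote. BDG on the displayed equation for $\hat K$ alone is circular, because bounding $\sup_t|\bar{\hat Y}_t|$ from it requires $\sup_t|\hat K_t|$. Apply It\^o's formula to $|\bar{\hat Y}_t|^2$ instead. Your Step 2 decomposition is pathwise and holds on every interval $[t,T]$, so $\sup_t\int_t^T\bar{\hat Y}_s\,d\hat K_s\le\sup_s|\Delta^Y_s|\,(K^{(N)}_T+K_T)$. Set $\bar{\hat Z}:=\frac1N\sum_i\hat Z^i$, use $|\bar{\hat Z}|^2\le\frac1N\sum_i|\hat Z^i|^2$, and apply BDG to $\sum_j\int\bar{\hat Y}_s\bar{\hat Z}^j_s\,dB^j_s$. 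This gives $\E[\sup_t|\bar{\hat Y}_t|^2]\le CN^{-1/2}$, after which your remaining steps go through.

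Also drop your final sentence. The $Z$-argument of Step 1 of Theorem~\ref{thm4.3} only gives $\E[\int_0^T|\hat Z^i_t|^2dt]\le C(\E[\sup_t|\hat Y^i_t|^2])^{1/2}\le CN^{-1/4}$. The claimed $N^{-1/2}$ rate for $\hat Z^i$ already follows from your averaged energy estimate plus exchangeability. You in fact need that bound earlier, to control $\E[\int_0^T|\hat f^i_s|^2ds]$ in the Doob step.
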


\begin{proof}
\textbf{Step 1.} We first show that, there exist a constant $C$ independent of $N$, such that for any $1\leq i\leq N$,
\begin{align}\label{step 1}
      \E\left[\int_0^T|\hat{Y}^i_s|ds+\int_0^T |\hat{Z}^i_s|ds\right]\leq \frac{C}{N^{\frac{1}{2}}}.  
    \end{align}
    
    Set 
    \begin{align*}
        \hat{f}^i_s=f^i\left(s,Y^i_s,\frac{1}{N}\sum_{i=1}^N Y^i_s,Z^{i,i}_s,\frac{1}{N}\sum_{i=1}^N Z^{i,i}_s\right)-f^i(s,\bar{Y}^i_s,\E[\bar{Y}^i_s],\bar{Z}^{i,i}_s,\E[\bar{Z}^{i,i}_s]).
    \end{align*}
    It is easy to check that 
    \begin{align*}
        \hat{Y}^i_t=L^{(N)}_T(U_T)+\int_t^T \hat{f}^i_s ds-\int_t^T \sum_{j=1}^N\hat{Z}^{i,j}_s dB^j_s+\hat{K}_T-\hat{K}_t.
    \end{align*}
    Applying It\^{o}'s formula to $e^{\alpha t}|\hat{Y}^i_t|^2$ and taking expectations, where $\alpha$ is a positive constant to be determined later, we obtain that
    \begin{equation}\label{eqO}\begin{split}
        &\E\left[|\hat{Y}^i_0|^2+\alpha \int_0^Te^{\alpha s}|\hat{Y}^i_s|ds+\int_0^T e^{\alpha s}|\hat{Z}^i_s|ds\right]\\
        =&\E\left[e^{\alpha T}|L^{(N)}_T(U_T)|^2+2\int_0^T e^{\alpha s}\hat{Y}^i_s\hat{f}^i_s ds+2\int_0^T e^{\alpha s}\hat{Y}^i_s d\hat{K}_s\right]\\
        =&\E\left[e^{\alpha T}|L^{(N)}_T(U_T)|^2+2\int_0^T e^{\alpha s}\hat{Y}^i_s\hat{f}^i_s ds+\frac{2}{a}\int_0^T e^{\alpha s}(h(s,Y^i_s)-h(s,\bar{Y}^i_s)) d\hat{K}_s\right].
    \end{split}\end{equation}
    Simple calculation yields that 
    \begin{align*}
        2\hat{Y}^i_s\hat{f}^i_s\leq & 2L|\hat{Y}^i_s|\left(|\hat{Y}^i_s|+|\hat{Z}^{i,i}_s|+\left|\frac{1}{N}\sum_{j=1}^N Y^j_s-\E[\bar{Y}^i_s]\right|+\left|\frac{1}{N}\sum_{j=1}^N Z^{j,j}_s-\E[\bar{Z}^{i,i}_s]\right|\right)\\
        \leq &\left(2L+L^2\left(\frac{1}{c_1}+\frac{1}{c_2}+\frac{1}{c_3}\right)\right)|\hat{Y}^i_s|^2+c_1|\hat{Z}^{i,i}_s|^2\\
        &+c_2\left|\frac{1}{N}\sum_{j=1}^N Y^j_s-\E[\bar{Y}^i_s]\right|^2+c_3\left|\frac{1}{N}\sum_{j=1}^N Z^{j,j}_s-\E[\bar{Z}^{i,i}_s]\right|^2.
    \end{align*}
    Since $\bar{Y}^i$, $1\leq i\leq N$, are independent copies of $Y\in \mathcal{S}^2$, it is easy to check that 
\begin{align*}
    \frac{1}{N}\sum_{j=1}^N Y^j_s-\E[\bar{Y}^i_s]=\frac{1}{N}\sum_{j=1}^N \hat{Y}^j_s+\frac{1}{N}\sum_{j=1}^N (\bar{Y}^j_s-\E[\bar{Y}^j_s])
\end{align*}
    Consequently, we have
    \begin{equation*}
\begin{split}
        \E\left[\int_0^Te^{\alpha s}\left|\frac{1}{N}\sum_{j=1}^N {Y}^j_s-\E[\bar{Y}^i_s]\right|^2ds\right]\leq &2\E\left[\int_0^T e^{\alpha s}\sum_{i=1}^N\frac{1}{N}|\hat{Y}^i_s|^2ds\right]\\
        &+2\E\left[\int_0^T e^{\alpha s}\left|\frac{1}{N}\sum_{i=1}^N(\bar{Y}^i_s-\E[\bar{Y}^i_s])\right|^2ds\right].
    \end{split}\end{equation*}
    Similarly, we have
    \begin{equation*}
\begin{split}
        \E\left[\int_0^Te^{\alpha s}\left|\frac{1}{N}\sum_{j=1}^N {Z}^{j,j}_s-\E[\bar{Z}^{i,i}_s]\right|^2ds\right]\leq &2\E\left[\int_0^T e^{\alpha s}\sum_{i=1}^N\frac{1}{N}|\hat{Z}^{i,i}_s|^2ds\right]\\
        &+2\E\left[\int_0^T e^{\alpha s}\left|\frac{1}{N}\sum_{i=1}^N(\bar{Z}^{i,i}_s-\E[\bar{Z}^{i,i}_s])\right|^2ds\right].
    \end{split}\end{equation*}
    Therefore, we obtain that 
    \begin{equation}\begin{split}\label{eqI}
        \E\left[2\int_0^T e^{\alpha s}\hat{Y}^i_s\hat{f}^i_s ds\right]\leq & \left(2L+L^2\left(\frac{1}{c_1}+\frac{1}{c_2}+\frac{1}{c_3}\right)\right)\E\left[\int_0^Te^{\alpha s}|\hat{Y}^i_s|ds\right]+c_1\E\left[\int_0^T e^{\alpha s}|\hat{Z}^i_s|ds\right]\\
        &+2c_2\E\left[\int_0^T e^{\alpha s}\sum_{i=1}^N\frac{1}{N}|\hat{Y}^i_s|^2ds\right]
        +2c_2\E\left[\int_0^T e^{\alpha s}\left|\frac{1}{N}\sum_{i=1}^N(\bar{Y}^i_s-\E[\bar{Y}^i_s])\right|^2ds\right]\\
        &+2c_3\E\left[\int_0^T e^{\alpha s}\sum_{i=1}^N\frac{1}{N}|\hat{Z}^{i}_s|^2ds\right]+2c_3\E\left[\int_0^T e^{\alpha s}\left|\frac{1}{N}\sum_{i=1}^N(\bar{Z}^{i,i}_s-\E[\bar{Z}^{i,i}_s])\right|^2ds\right].
    \end{split}\end{equation}
   
    On the other hand, since $\frac{1}{N}\sum_{i=1}^Nh(s,Y^i_s)\geq 0$ and $K$ is nondecreasing, we have 
    \begin{align*}
        \E\left[\int_0^T e^{\alpha s}\frac{1}{N}\sum_{i=1}^Nh(s,Y^i_s) dK_s\right]\geq 0.
    \end{align*}
    Recalling that $\bar{Y}^i$, $1\leq i\leq N$, are independent copies of $Y$, we obtain that 
    \begin{align*}
        \E\left[\int_0^T e^{\alpha s}\frac{1}{N}\sum_{i=1}^Nh(s,\bar{Y}^i_s) dK_s\right]=\int_0^T e^{\alpha s}\E[h(s,{Y}_s)] dK_s=0.
    \end{align*}
    Noting that $\E[h(s,\bar{Y}^i_s)]\geq 0$, $K^{(N)}$ is nondecreasing and $\bar{Y}^i_t=\bar{U}^i_t+K_T-K_t$, we have
    \begin{align*}
        -\E\left[\int_0^T e^{\alpha s}\frac{1}{N}\sum_{i=1}^Nh(s,\bar{Y}^i_s) dK^{(N)}_s\right]\leq &\E\left[\int_0^T e^{\alpha s}\frac{1}{N}\sum_{i=1}^N (\E[h(s,\bar{Y}^i_s)]-h(s,\bar{Y}^i_s)) dK^{(N)}_s\right]\\
        =&\E\left[\int_0^T e^{\alpha s}\frac{1}{N}\sum_{i=1}^N (\E[\bar{U}^i_s]-\bar{U}^i_s) dK^{(N)}_s\right]\\
        \leq &e^{\alpha T}\left(\E\left[|K^{(N)}_T|^2\right]\right)^{\frac{1}{2}}\left(\E\left[\sup_{t\in[0,T]}\left|\frac{1}{N}\sum_{i=1}^N (\E[\bar{U}^i_s]-\bar{U}^i_t)\right|^2\right]\right)^{\frac{1}{2}}.
    \end{align*}
    Recalling the Skorokhod condition in \eqref{eq7'}, all the above analysis indicates that  
    \begin{equation}\label{eqII}\begin{split}
        &\E\left[\int_0^T e^{\alpha s}\frac{1}{N}\sum_{i=1}^N(h(s,Y^i_s)-h(s,\bar{Y}^i_s)) d\hat{K}_s\right]\\
        =&\E\left[\int_0^T e^{\alpha s}\frac{1}{N}\sum_{i=1}^Nh(s,Y^i_s) dK^{(N)}_s\right]-\E\left[\int_0^T e^{\alpha s}\frac{1}{N}\sum_{i=1}^Nh(s,Y^i_s) dK_s\right]\\
        &-\E\left[\int_0^T e^{\alpha s}\frac{1}{N}\sum_{i=1}^Nh(s,\bar{Y}^i_s) dK^{(N)}_s\right]+\E\left[\int_0^T e^{\alpha s}\frac{1}{N}\sum_{i=1}^Nh(s,\bar{Y}^i_s) dK_s\right]\\
        \leq &e^{\alpha T}\left(\E\left[|K^{(N)}_T|^2\right]\right)^{\frac{1}{2}}\left(\E\left[\sup_{t\in[0,T]}\left|\frac{1}{N}\sum_{i=1}^N (\E[\bar{U}^i_s]-\bar{U}^i_t)\right|^2\right]\right)^{\frac{1}{2}}.
    \end{split}\end{equation}

    Now, we choose 
    \begin{align*}
        c_1=\frac{1}{4}, \ c_2=1, \ c_3=\frac{1}{8}, \ \alpha=\left(2L+L^2\left(\frac{1}{c_1}+\frac{1}{c_2}+\frac{1}{c_3}\right)\right)+2c_2+\frac{1}{2}.
    \end{align*}
    Pluggging Eq. \eqref{eqI} into \eqref{eqO} and then summing over $i$, together with Eq. \eqref{eqII} and Theorem \ref{thm5.1'}, there exists a constant $C$ independent of $N$, such that 
    \begin{align*}
        &\frac{1}{N}\sum_{i=1}^N\E\left[\int_0^T|\hat{Y}^i_s|ds+\int_0^T |\hat{Z}^i_s|ds\right]\\
        \leq & C\left\{\E\left[|L^{(N)}_T(U_T)|^2\right]+\left(\E\left[\sup_{t\in[0,T]}\left|\frac{1}{N}\sum_{i=1}^N (\E[\bar{U}^i_t]-\bar{U}^i_t)\right|^2\right]\right)^{\frac{1}{2}}\right\}\\
        &+C\left\{\E\left[\int_0^T \left|\frac{1}{N}\sum_{i=1}^N(\bar{Y}^i_s-\E[\bar{Y}^i_s])\right|^2ds\right]+\E\left[\int_0^T \left|\frac{1}{N}\sum_{i=1}^N(\bar{Z}^{i,i}_s-\E[\bar{Z}^{i,i}_s])\right|^2ds\right]\right\}.
    \end{align*}
    
    Applying Eq. \eqref{var of Yj}, we have 
    \begin{align}\label{eqIII}
        \E\left[\int_0^T \left|\frac{1}{N}\sum_{i=1}^N(\bar{Y}^i_s-\E[\bar{Y}^i_s])\right|^2ds\right]\leq \frac{C}{N}.
    \end{align}
    Recalling that $\bar{Z}^{i,i}$, $1\leq i\leq N$, are independent copies of $Z$, it follows that 
    \begin{equation}\begin{split}\label{eqIV}
        \E\left[\int_0^T \left|\frac{1}{N}\sum_{i=1}^N(\bar{Z}^{i,i}_s-\E[\bar{Z}^{i,i}_s])\right|^2ds\right]&=\int_0^T \frac{1}{N^2}\sum_{i=1}^N \E\left[|\bar{Z}^{i,i}_s-\E[\bar{Z}^{i,i}_s]|^2\right]ds\\
        &\leq \frac{4}{N}\int_0^T \E[|Z_s|^2]ds\leq \frac{C}{N}.
    \end{split}\end{equation}
    It is easy to check that 
    \begin{align*}
       \E\left[|L^{(N)}_T(U_T)|^2\right]=\E\left[\left|\frac{1}{N}\left(\sum_{i=1}^N (\xi^i+\frac{b}{a})\right)^-\right|^2\right] \leq \frac{C}{N}. 
    \end{align*}
    Note the fact that 
    \begin{align*}
        \frac{1}{N}\sum_{i=1}^N (\bar{U}^i_t-\E[\bar{U}^i_t])=\frac{1}{N}\sum_{i=1}^N (\bar{U}^i_T-\E[\bar{U}^i_T])+\int_t^T \frac{1}{N}\sum_{i=1}^N (\bar{f}^i_s-\E[\bar{f}^i_s])ds-\frac{1}{N}\int_t^T \sum_{i=1}^N \bar{Z}^{i,i}_sdB^i_s,
    \end{align*}
    where 
    \begin{align*}
        \bar{f}^i_s=f^i(s,\bar{Y}^i_s,\E[\bar{Y}^i_s],\bar{Z}^{i,i}_s,\E[\bar{Z}^{i,i}_s]).
    \end{align*}
    It follows that 
    \begin{equation}\label{var of barUi}\begin{split}
        &\E\left[\sup_{t\in[0,T]}\left|\frac{1}{N}\sum_{i=1}^N (\E[\bar{U}^i_t]-\bar{U}^i_t)\right|^2\right]\\
      \leq &\frac{C}{N}\left\{\mathbb{V}[\bar{U}^1_T]+\int_0^T \mathbb{V}[\bar{f}^1_s]ds+\E\left[\int_0^T |\bar{Z}^1_s|^2ds\right]\right\}\\
      \leq &\frac{C}{N}\left\{\E[|\xi|^2]+\E\left[\int_0^T |f(s,Y_s,\E[Y_s],Z_s,\E[Z_s])|^2ds\right]+\E\left[\int_0^T |{Z}_s|^2ds\right]\right\}.
    \end{split}\end{equation}
    All the above analysis indicates that 
    \begin{align}\label{eq0'}
      \frac{1}{N}\sum_{i=1}^N\E\left[\int_0^T|\hat{Y}^i_s|ds+\int_0^T |\hat{Z}^i_s|ds\right]\leq \frac{C}{N^{\frac{1}{2}}}.  
    \end{align}
    As claimed in the proof of Theorem 5.3 in \cite{BH}, the law of $(\hat{Y}^i,\hat{Z}^i)$ is independent of $i$, we obtain Eq. \eqref{step 1}.

    \textbf{Step 2.} Now, we are in a position to prove the final result. Set $\hat{U}^i_t=U^i_t-\bar{U}^i_t$. By Remark \ref{rem representation} and Eq. \eqref{LtXLtY}, we have 
    \begin{align*}
        |\hat{Y}^i_t|\leq& |\hat{U}^i_t|+\E\left[\sup_{s\in[t,T]}|L^{(N)}_s(U_s)-L_s(\bar{U}^i_s)|\bigg|\mathcal{F}^{(N)}_t\right]\\
        \leq &|\hat{U}^i_t|+\E\left[\sup_{s\in[0,T]}|L^{(N)}_s({U}_s)-L^{(N)}_s(\bar{U}_s)\Big|\mathcal{F}^{(N)}_t\right]+\E\left[\sup_{s\in[0,T]}|L^{(N)}_s(\bar{U}_s)-L_s(\bar{U}^i_s)|\Big|\mathcal{F}^{(N)}_t\right]\\
         \leq &|\hat{U}^i_t|+\frac{M}{m}\E\left[\sup_{s\in[0,T]}\frac{1}{N}\sum_{i=1}^N|\hat{U}^i_s|\Big|\mathcal{F}^{(N)}_t\right]+\E\left[\sup_{s\in[0,T]}|L^{(N)}_s(\bar{U}_s)-L_s(\bar{U}^i_s)|\Big|\mathcal{F}^{(N)}_t\right].
    \end{align*}
    It follows that 
    \begin{align}\label{eq0}
        \E\left[\sup_{t\in[0,T]}|\hat{Y}^i_t|^2\right]\leq C\left\{\E\left[\sup_{t\in[0,T]}|\hat{U}^i_t|^2\right]+\frac{1}{N}\sum_{i=1}^N \E\left[\sup_{t\in[0,T]}|\hat{U}^i_t|^2\right]+\E\left[\sup_{s\in[0,T]}|L^{(N)}_s(\bar{U}_s)-L_s(\bar{U}^i_s)|^2\right]\right\}.
    \end{align}
    Recalling the definition of $L^{(N)}_t(\bar{U}_t)$ and $L_t(\bar{U}^i_t)$ in Remark \ref{rem representation}, we obtain that 
    \begin{align*}
        |L^{(N)}_t(\bar{U}_t)-L_t(\bar{U}^i_t)|=\left|\frac{1}{N}\left(\sum_{i=1}^N\bar{U}^i_t+\frac{b}{a}\right)^--\frac{1}{N}\left(\sum_{i=1}^N\E[\bar{U}^i_t]+\frac{b}{a}\right)^-\right|\leq \frac{1}{N}\left|\sum_{i=1}^N (\bar{U}^i_t-\E[\bar{U}^i_t])\right|,
    \end{align*}
    where we have used the fact that $\bar{U}^i_t$, $1\leq i\leq N$, are IID random variables. Applying Eq. \eqref{var of barUi} implies that 
    \begin{align}\label{eqV}
        \E\left[\sup_{s\in[0,T]}|L^{(N)}_s(\bar{U}_s)-L_s(\bar{U}^i_s)|^2\right]\leq \frac{C}{N}.
    \end{align}
    For each $j=1,\cdots,N$, applying Doob's inequality and H\"{o}lder's inequality, we obtain that 
    \begin{align*}
        &\E\left[\sup_{t\in[0,T]}|\hat{U}^j_t|^2\right]\leq C\E\left[\int_0^T |\hat{f}^j_t|^2dt\right]\\
        \leq &C\left\{\E\left[\int_0^T(|\hat{Y}^j_t|^2+|\hat{Z}^j_t|^2)dt\right]+\E\left[\int_0^T\left|\frac{1}{N}\sum_{i=1}^N Y^i_t-\E[\bar{Y}^j_t]\right|^2+\left|\frac{1}{N}\sum_{i=1}^N Z^{i,i}_t-\E[\bar{Z}^{j,j}_t]\right|^2dt\right]\right\}\\
        \leq &C\left\{\E\left[\int_0^T(|\hat{Y}^j_t|^2+|\hat{Z}^j_t|^2)dt\right]+\frac{1}{N}\E\left[\int_0^T\sum_{i=1}^N |\hat{Y}^i_t|^2dt\right]+\frac{1}{N}\E\left[\int_0^T\sum_{i=1}^N |\hat{Z}^{i,i}_t|^2dt\right]\right\}\\
         &+C\left\{\E\left[\int_0^T\left|\frac{1}{N}\sum_{i=1}^N (\bar{Y}^i_t-\E[\bar{Y}^i_t])\right|^2dt\right]+\E\left[\int_0^T\left|\frac{1}{N}\sum_{i=1}^N (\bar{Z}^{i,i}_t-\E[\bar{Z}^{i,i}_t])\right|^2dt\right]\right\},
    \end{align*}
    which, combining with Eqs. \eqref{step 1}, \eqref{eqIII}, \eqref{eqIV} and \eqref{eq0'}, implies that 
    \begin{align}\label{eqVI}
        \E\left[\sup_{t\in[0,T]}|\hat{U}^j_t|^2\right]\leq C\E\left[\int_0^T |\hat{f}^j_t|^2dt\right]\leq \frac{C}{N^{\frac{1}{2}}}. 
    \end{align}
    Plugging Eqs. \eqref{eqV} and \eqref{eqVI} into \eqref{eq0} yields that
    \begin{align}\label{eqVII}
        \E\left[\sup_{t\in[0,T]}|\hat{Y}^i_t|^2\right]\leq \frac{C}{N^{\frac{1}{2}}}. 
    \end{align}

    Finally, since
    \begin{align*}
        \hat{K}_t=\hat{Y}^i_0-\hat{Y}^i_t-\int_0^t \hat{f}^i_sds+\int_0^t \sum_{j=1}^N\hat{Z}^{i,j}_sdB^j_s,
    \end{align*}
    Applying Eqs. \eqref{step 1}, \eqref{eqVI} and \eqref{eqVII}, we obtain that 
    \begin{align*}
        \E\left[\sup_{t\in[0,T]}|\hat{K}_t|^2\right]\leq C\left\{\E\left[\sup_{t\in[0,T]}|\hat{Y}^i_t|^2\right]+\E\left[\int_0^T|\hat{Z}^i_s|^2ds\right]+\E\left[\int_0^T|\hat{f}^i_t|^2dt\right]\right\}\leq \frac{C}{N^{\frac{1}{2}}}. 
    \end{align*}
    The proof is complete.
\end{proof}

\begin{remark}
   For the mean-field reflected BSDE \eqref{nonlinearyz'}, suppose the barrier $l$ takes the following form
    \begin{align}
        l(t,y,\mu)=y-a\int x\mu(dx)-b.
    \end{align}
    Then, the requirement $Y_t\geq l(t,Y_t,\P_{Y_t})$ turns into our constraint $a\E[Y_t]+b\geq 0$. However, since the Lipschitz constant $\gamma_1$ for $l$ is $1$ and the generator $f$ depends on the expectation of $Z$, the convergence results and propagation of chaos in \cite{DDZ} (see Proposition 4.2, Theorem 4.2, Proposition 4.3 and Theorem 4.3 in \cite{DDZ}) cannot be applied to our case.  
\end{remark}


\begin{thebibliography}{99}

\bibitem{BEH} Briand, P., Elie, R. and Hu, Y. (2018) BSDEs with mean reflection.  Ann.  Appl. Probab., 28: 482-510.

\bibitem{BH} Briand, P. and Hibon, H. (2021) Particle systems for mean reflected BSDEs. Stochastic Processes and their Applications, 131: 253-275.

\bibitem{BCGL} Briand, P., Chaudru de Raynal, P.E., Guillin, A. and Labart, C. (2020) Particle systems and numerical schemes for mean reflected stochastic differential equations. Ann. Appl. Probab., 30(4): 1884-1909. 

\bibitem{BDLP} Buckdahn, R.,  Djehiche, B.,  Li, J. and Peng, S. (2009) Mean-field backward stochastic differential equations: a limit approach, Ann. Probab., 37(4): 1524–1565.

\bibitem{BLP} Buckdahn, R.,  Li, J. and Peng, S. (2009) Mean-field backward stochastic differential equations and related partial differential equations. Stochastic Processes and their Applications, 119: 3133-3154.


\bibitem{CHM} Chen, Y., Hamad\`{e}ne, S. and Mu, T. (2022) Mean-field doubly reflected backward stochastic differential equations. Numerical Algebra, Control and Optimization, doi:10.3934/naco.2022012.

\bibitem{CM} Cr\'{e}pey, S. and Matoussi, A. (2008) Reflected and doubly reflected BSDEs with jumps. Ann. Probab.,  18(5): 2041-2069.

\bibitem{CZ} Cui, F. and Zhao, W. (2025) Mean-field BSDEs with non-Lipschitz coefficients. J. Math. Anal. Appl., 545: 129256.

\bibitem{CK} Cvitanic, J. and Karatzas, I. (1996) Backward stochastic differential equations with reflection and Dynkin games. Ann. Probab., 24(4): 2024-2056.



\bibitem{DEH} Djehiche, B., Elie, R. and Hamad\`{e}ne, S. (2023) Mean-field reflected backward stochastic differential equations. Ann. Appl. Probab., 33(4): 2493-2518. 

\bibitem{DD} Djehiche, B. and Dumitrescu, R. (2025) Zero-sum mean-field Dynkin games: characterization and convergence. Mathematics of Operations Research, available online.

\bibitem{DDZ} Djehiche, B., Dumitrescu, R. and Zeng, J. (2025) A propagation of chaos result for weakly interacting nonlinear Snell
envelopes. Stochastic Processes and their Applications, 188: 104669.


\bibitem{EKPPQ} El Karoui, N., Kapoudjian, C., Pardoux, E., Peng, S. and Quenez, M.C. (1997) {Reflected solutions of backward SDE's, and related obstacle problems for PDE's}. Ann. Probab.,  23(2): 702-737.





\bibitem{FS} Falkowski, A. and Slomi\'{n}ski, L. (2022) Backward stochastic differential equations with mean reflection and two constraints. Bulletin des Sciences Math\'{e}matiques, 176: 103117.


\bibitem{GP} G\'{e}gout-Petit, A. and Pardoux, E. (1996) \'{E}quations diff\'{e}rentielles stochastiques  r\'{e}trogrades r\'{e}fl\'{e}chies dans un convexe. Stoch. Stoch. Rep., 57(1-2): 111-128.

\bibitem{GIOOQ} Grigorova, M., Imkeller, P., Offen, E., Ouknine, Y. and Quenez, M.C. (2017) Reflected BSDEs when the obstacle is not right-continuous and optimal stopping. Ann. Appl. Probab., 27: 172-196.





\bibitem{HHLLW} Hibon, H., Hu, Y., Lin, Y., Luo, P. and Wang, F. (2018) Quadratic BSDEs with mean reflection. Mathematical Control and Related Fields. 8: 721-738.

\bibitem{HMW'} Hu, Y., Moreau, R. and Wang, F. (2022) Quadratic mean-field reflected BSDEs. Probability, Uncertainty and Quantitative Risk, 7(3): 169-194.

\bibitem{HMW} Hu, Y., Moreau, R. and Wang, F. (2024) General mean reflected backward stochastic differential equations. Journal of Theoretical Probability, 37: 877-904.

\bibitem{HT} Hu, Y., and Tang, S. (2010) Multi-dimensional BSDE with oblique reflection and optimal switching. Probab. Theory Related Fields, 147(1-2): 89-121.

\bibitem{K1} Klimsiak, T. (2012) Reflected BSDEs with monotone generator.  Electron. J. Probab., 17(107): 1-25.



\bibitem{KLQT} Kobylanski, M., Lepeltier, J.P., Quenez, M.C. and Torres, S. (2002) Reflected BSDE with superlinear quadratic coefficient. Probability and Mathematical Statistics, 22(1): 51-83.


\bibitem{Li24} Li, H. (2024) Backward stochastic differential equations with double mean reflections. Stoch. Proc. Appl., 173: 104371.

\bibitem{LN} Li, H. and Ning, N. (2024) Propagation of chaos for doubly mean reflected BSDEs, arXiv: 2401.16617.

\bibitem{LS} Li, H. and Shi, J. (2025) Mean-field backward stochastic differential equations with double mean reflections, arXiv: 2501.10939.

\bibitem{LS'} Li, H. and Shi, J. (2026) Mean-field backward stochastic differential equations with nonlinear resistance and double mean reflections, arXiv: 2605.15781.

\bibitem{LX1} Lin, Y. and Xu, K. (2024) Mean-field reflected BSDEs driven by a marked point process, arXiv: 2401.07723.

\bibitem{LX2} Lin, Y. and Xu, K. (2025) Propagation of chaos for mean-field reflected BSDEs with jumps. Statistics and Probability Letters, 221: 110382.

\bibitem{NQW} Niu, Y., Qu, B. and Wang, F. (2025) $L^p$-solutions of multi-dimensional BSDEs with mean reflection. Stoch. Proc. Appl., 187: 104663.

\bibitem{QW} Qu, B. and Wang, F. (2023) Multi-dimensional BSDEs with mean reflection. Electron. J. Probab., 28: 1-26.

\bibitem{PX} Peng, S. and Xu, M. (2005) The smallest $g$-supermartingale and reflected BSDE with single and double $L^2$ obstacles. Ann. I. H. Poincare-PR,  41: 605-630.

\bibitem{PR} Possamaï, D. and Rodrigues, M. (2024) Reflections on BSDEs. Electron. J. Probab., 29: 1-82.


\end{thebibliography}
 \end{document}